% SIAM Article Template
\documentclass[article,hidelinks,onefignum,onetabnum]{siamart250211}
%review
% Information that is shared between the article and the supplement
% (title and author information, macros, packages, etc.) goes into
% ex_shared.tex. If there is no supplement, this file can be included
% directly.

% SIAM Shared Information Template
% This is information that is shared between the main document and any
% supplement. If no supplement is required, then this information can
% be included directly in the main document.

% Packages and macros go here
\usepackage{lipsum}
\usepackage{amsfonts}
\usepackage{graphicx}
\usepackage{epstopdf}
\usepackage{comment}
\usepackage{tikz}
\usepackage{subfig}
\usepackage{multirow}%
\usepackage[percent]{overpic}
\usepackage{algorithmic}
\usetikzlibrary{arrows.meta, positioning}
\ifpdf
  \DeclareGraphicsExtensions{.eps,.pdf,.png,.jpg}
\else
  \DeclareGraphicsExtensions{.eps}
\fi

% Add a serial/Oxford comma by default.

% Used for creating new theorem and remark environments
\newsiamremark{remark}{Remark}
\newsiamremark{hypothesis}{Hypothesis}
\crefname{hypothesis}{Hypothesis}{Hypotheses}
\newsiamthm{claim}{Claim}
\newsiamremark{fact}{Fact}
\crefname{fact}{Fact}{Facts}

% Sets running headers as well as PDF title and authors
\headers{Stage-wise image restoration}{Liang Luo, Lei Zhang}

% Title. If the supplement option is on, then "Supplementary Material"
% is automatically inserted before the title.
\title{A Semi-Convergent Stage-Wise Framework with Provable Global Convergence for Adaptive Total Variation Regularization\thanks{Submitted to the editors DATE.
\funding{This work is supported by the Zhejiang Provincial Natural Science Foundation under Grants LZ23A010006 and LY23A010004, NNSF of China under Grant 12271482, Research Grant of Zhejiang University of Technology 2022109001429.}}}

% Authors: full names plus addresses.
\author{Liang Luo \thanks{School of Mathematical Sciences, Zhejiang University of Technology, Hangzhou, China
  (\email{llmath09@163.com}).}
\and Lei Zhang\thanks{Corresponding author, School of Mathematical Sciences, Zhejiang University of Technology, Hangzhou, China
  (\email{zhanglei@zjut.edu.cn}) }.}
\usepackage{amsopn}

% Optional PDF information
\ifpdf
\hypersetup{
  pdftitle={An Example Article},
  pdfauthor={D}
}
\fi

% The next statement enables references to information in the
% supplement. See the xr-hyperref package for details.

\externaldocument[][nocite]{ex_supplement}
%\externaldocument{ex_supplement}

% FundRef data to be entered by SIAM
%<funding-group specific-use="FundRef">
%<award-group>
%<funding-source>
%<named-content content-type="funder-name"> 
%</named-content> 
%<named-content content-type="funder-identifier"> 
%</named-content>
%</funding-source>
%<award-id> </award-id>
%</award-group>
%</funding-group>
%\usepackage{refcheck}
\begin{document}

\setlength{\textfloatsep}{5pt plus 1.0pt minus 3.0pt}
\setlength{\intextsep}{5pt plus 1.0pt minus 3.0pt}

\maketitle

% REQUIRED
\begin{abstract}
Image restoration requires a careful balance between noise suppression and structure preservation. While first-order total variation (TV) regularization effectively preserves edges, it often introduces staircase artifacts, whereas higher-order TV removes such artifacts but oversmooths fine details. To reconcile these competing effects, we propose a semi-convergent stage-wise framework that sequentially integrates first- and higher-order TV regularizers within an iterative restoration process implemented via ADMM. Each stage exhibits semi-convergence behavior—the iterates initially approach the ground truth before being degraded by over-regularization. By monitoring this evolution, the algorithm adaptively selects the locally optimal iterate (e.g., with the highest PSNR) and propagates it as the initial point for the next stage. This select-and-propagate mechanism effectively transfers local semi-convergence into a globally convergent iterative process. We establish theoretical guarantees showing that the sequence of stage-wise iterates is bounded, the objective values decrease monotonically. Extensive numerical experiments on denoising and deblurring benchmarks confirm that the proposed method achieves superior quantitative and perceptual performance compared with conventional first-, higher-order, hybrid TV methods, and 
learning based methods, while maintaining theoretical interpretability and algorithmic simplicity.
\end{abstract}

\begin{keywords}
Image restoration, Iterative regularization, Semi-convergence, Stage-wise optimization
\end{keywords}

\begin{MSCcodes}
68U10, 49M27, 68U05
\end{MSCcodes}

\section{Introduction}
Image restoration is a fundamental task in image processing, aiming to recover a clean image from its noisy observation while preserving critical details. In traditional approaches, whether based on linear filters or variational models, noise can be effectively reduced. However, these methods may blur fine structures or introduce artifacts, especially when dealing with high-level noise \cite{adam2021study,adam2021combined,nasonov2018improvement}. 
 To achieving a good balance between noise reduction and structure preservation remains a significant challenge. 
 
 Let ${u} \in \mathbb{R}^{MN \times 1}$ and ${f} \in \mathbb{R}^{MN \times 1}$ denote respectively the desired image and the degraded image. Then, the degraded image $f$ can be turned into the following inverse problem.
\label{noise model}
\begin{equation}
     {f} = {K}{u} + \eta,
\end{equation}
where $\eta \in \mathbb{R}^{MN \times1}$ denotes the Gaussian white noise
and  ${K} \in \mathbb{R}^{MN \times MN}$ represents the blurred operator.

Due to the reconstruction of the desired image $u$ is a severely ill-posed problem, it is not possible to obtain a unique solution \cite{bereziat2011solving,chen2023multi}. In order to address the issue, regularization techniques are employed. These techniques introduce additional constraints or prior knowledge into the model, which helps stabilize the inversion process and ensures that the solution is meaningful and well-posed \cite{afraites2022weighted,hintermuller2017optimal}. By penalizing undesirable features, regularization guides the solution toward a more robust and realistic result.  For image restoration, we usually describe it using the following optimization problem.
\begin{equation}
\label{optimization problem}
 u=\arg \min\limits_{u} \mathcal{L} (Ku,f) + \lambda\,  \phi({u})
\end{equation}
where $\mathcal{L}(Ku,f)$ denotes data fidelity, $\lambda>0$ represents the regularization parameter, and $\phi({u})$ denotes the regularization function. 

In problem \cref{optimization problem}, the similarity between the observed image and the estimated image is guaranteed by the data fidelity term.The regularization term encodes prior knowledge and helps to extract important structural information from the image. Among various regularization techniques, the TV regularization, originally proposed by Rudin et al. \cite{rudin1992nonlinear}, has become one of the most influential methods in image processing. It is formulated as follows.
\begin{equation}
\label{ROF model}
       u =  \arg \min\limits_{u}  \frac{1}{2} \| {K}{u}-{f}\|_2^2 + \lambda \, \phi({Du})
\end{equation}
where $\phi({Du}) = \sum\limits_{i\leq M,j\leq N}|({Du})_{i,j}|$ and ${K}$ denotes a convolution i.e., a blurring matrix. For image denoising task, $K$ is the identity operator.

By using first-order TV regularization, some satisfactory results have been obtained in image denoising and edge preservation\cite{guo2021image,khan2017mesh}.
However, such a model tends to produce blocky artifacts in uniform regions, which will significantly degrade image quality \cite{papafitsoros2015novel,xu2014coupled,yin2022ℓ0}.To address this drawback, several methods have been developed and have shown promising results in suppressing such artifacts while achieving better image restoration quality \cite{liu2015image,adam2019image,beck2016rate,bredies2010total}. Among these, the most fundamental higher-order TV model was introduced by Lysaker et al. \cite{lysaker2003noise}, and it can be written as
\begin{equation}
\label{LLT model}
     u = \arg \min\limits_{{u}} \frac{1}{2} \| {K}{u}-{f}\|_2^2 + \lambda \,  \phi({DDu}) 
\end{equation}
where $\phi({DDu}) = \sum\limits_{i\leq M, j\leq N}|({DDu})_{i,j}|$. 

The higher-order TV model (\ref{LLT model}) exceeds the first-order TV model (\ref{ROF model}), the limitations of higher-order TV regularization have been indicated, showing that these methods may degrade edge sharpness and lead to the loss of fine textural details \cite{papafitsoros2014combined}.

Building upon the aforementioned models, an iterative image restoration (IIR) method \cite{lysaker2006iterative} was proposed, utilizing initially a weighting function $\theta$ to combine the outputs of the first- and higher- order TV models. This hybrid approach leverages the complementary strengths of both models to achieve a trade-off between noise reduction and artifact suppression. Motivated by this idea, parallel hybrid approaches that combine the first-order TV term with higher-order TV terms and overlapping group sparsity (OGS) were proposed in \cite{li2007image,pang2011proximal,oh2013non,jon2021image,papafitsoros2014combined,bhutto2023image}. These methods can reduce noise, avoid artifacts in smooth regions, and achieve better restoration results.

Although significant progress has been made in optimizing regularization terms in existing research, the semi-convergence problem of iterative methods in image denoising cannot be effectively avoided. In this work, we propose a novel stage-wise image denoising framework that combines the strengths of both first-order and higher-order TV regularizations. In each stage of the framework, these two regularization models are applied sequentially within an ADMM optimization scheme. The design is motivated by the complementary characteristics of first- and higher-order TV: the former excels at preserving sharp edges, while the latter suppresses staircase artifacts and residual noise.

A key feature of the proposed method lies in its semi-convergent refinement strategy. Specifically, we exploit the semi-convergence properties of iterative regularization by monitoring the PSNR evolution within each stage: the intermediate result with the higher PSNR is regarded as a locally optimal solution, which serves as the initial guess for the subsequent stage. This strategy allows the overall process to adaptively refine the reconstruction and improves stability. We further establish the convergence of this stage-wise process both theoretically and numerically, and extensive experiments on standard benchmarks show that our method consistently outperforms single-model TV-based baselines in both quantitative metrics (e.g., PSNR, SSIM, FOM) and visual quality.

The remainder of this paper is structured as follows. Section~\ref{sec2} reviews the necessary preliminaries and the ADMM framework. Section \ref{sec3} discusses the optimization procedure for the proposed method and provides a theoretical convergence analysis. Section~\ref{sec4} presents numerical experiments to validate the convergence behavior and restoration performance of the proposed framework. Finally, Section~\ref{sec5} summarizes the work and presents directions for future research.

\begin{comment}
The next two paragraphs are text filler,
generated by the \texttt{lipsum} package.

\lipsum[2-3]

% The outline is not required, but we show an example here.
The paper is organized as follows. Our main results are in
\cref{sec:main}, our new algorithm is in \cref{sec:alg}, experimental
results are in \cref{sec:experiments}, and the conclusions follow in
\cref{sec:conclusions}. 
\end{comment}

\section{Preliminary}
\label{sec2}
 A digital image can be modeled as a two-dimensional function $u(i,j)$, where $i$ and $j$ denote respectively the spatial coordinates.
For the image of size $n = M \times N$, its discrete representation will form  a matrix.
In typical practice, the image is vectorized into an $n$-dimensional column vector. In the following   subsection, we introduce some basic notions and review ADMM algorithms.

\subsection{The Notion of TV Operators}
The discrete first-order and second-order gradient operators, denoted respectively as $D$ and $DD$, are constructed following the formulation presented in Wu and Tai~\cite{wu2010augmented}. Accordingly, the first-order discrete gradient $ ({Du})_{i,j} $ is given by 

 \begin{equation}
    ({Du})_{i,j} = (({D}_x^+{u})_{i,j},({D}_y^+{u})_{i,j})\\
 \end{equation}
where ${D}_x^+{u}$ and ${D}_y^+{u}$ are the first-order forward differences %of $u$ at point $(i,j)$ along the horizontal and vertical directions 
defined respectively as
\begin{equation}
(D_x^+u)_{i,j} = \begin{cases}
u_{i,j+1} - u_{i,j}, & 1 \leq j \leq N-1 \\
u_{i,1} - u_{i,N}, & j = N
\end{cases}
\end{equation}
\begin{equation}
(D_y^+u)_{i,j} = \begin{cases}
u_{i+1,j} - u_{i,j}, & 1 \leq i \leq M-1 \\
u_{1,j} - u_{N,j}, & i = M
\end{cases}    
\end{equation}
and the first-order backward differences ${D}_x^-{u}$ and ${D}_y^-{u}$ separately are
\begin{equation}
(D_x^-u)_{i,j} = \begin{cases}
u_{i,j} - u_{i,j-1}, & 1 < j \leq N \\
u_{i,1} - u_{i,N}, & j = 1
\end{cases}
\end{equation}
\begin{equation}
(D_y^-u)_{i,j} = \begin{cases}
u_{i,j} - u_{i-1,j}, & 1 \leq i \leq N \\
u_{1,j} - u_{n,j}, & i = 1
\end{cases} 
\end{equation}
and the second-order discrete gradient $({DDu})_{i,j}$ can be described as
\begin{equation}
(DDu)_{i,j} =
\begin{pmatrix}
{D}_x^-(D_x^+u_{i,j}) & {D_y^+}(D_x^+u_{i,j}) \\ 
{D_x^+}(D_y^+ u_{i,j}) & {D_y^-}(D_y^+ u_{i,j})
\end{pmatrix} 
\end{equation}

Detailed definitions of the discrete second-order gradient operators $D_x^-(D_x^+)$, $D_y^+(D_x^+)$, $D_x^+(D_y^+)$, and $D_y^-(D_y^+)$ can be found in ~\cite{wu2010augmented}.

\subsection{ADMM algorithm}
As an enhanced variant of the classical augmented Lagrangian approach \cite{de2023constrained}, the alternating direction method of multipliers (ADMM) \cite{han2012note,xiang2023poisson} decomposes multi-variable optimization tasks into smaller sub-problems for iterative solutions. ADMM has been shown to be highly effective for constrained convex optimization problems with separable structures, due to this decomposition framework.

\begin{equation}
\label{ADMM}
\min\limits_{{x,z}}f(x) + g(z),\ \text{s.t.} \ Ax + Bz = c.
\end{equation}
where $f(\cdot)$, $g(\cdot)$ are closed convex functions,  $A \in \mathbb{R}^{p \times n} $, $B \in \mathbb{R}^{p \times m} $ are linear operator, $x \in \mathbb{R}^n$, $z \in \mathbb{R}^m$ are elements of nonempty closed convex sets, $c \in \mathbb{R}^p$ represents a given vector. The augmented Lagrangian function corresponding to problem \cref{ADMM} takes the form
\begin{equation}
\label{ALF}
    L_{\rho}(x,z,\mu) =f(x) + g(z) - {\mu}^T(Ax+Bz-c) 
    + \frac{\rho}{2} \| Ax+Bz-c \|_2^2 
\end{equation}
where $\mu \in \mathbb{R}^p$ denotes the Lagrange multiplier and $\rho >0$ is the penalty parameter. 

The saddle point of problem \cref{ALF} is achieved by alternatively minimizing $L_{\rho}$ over $x$, $z$, and $\mu$. The ADMM algorithm for optimizing problem \cref{ADMM} is outlined in \cref{algo1}.

\begin{algorithm}
\caption{ADMM for the optimization problem \cref{ADMM}}
\label{algo1}
\begin{algorithmic}
\STATE{Initialize $x^0$, $z^0$, $\mu^0$ and $\rho >0$.}
\WHILE{stopping condition is not satisfied}
\STATE{$x^{k+1} = \arg\min\limits_{x} \left( f(x) + \frac{\rho}{2} \| Ax + Bz - c + \frac{\mu^k}{\rho} \|_2^2 \right)$} \text{ // Update $x$}
\STATE{$z^{k+1} = \arg\min\limits_{z} \left( g(z) + \frac{\rho}{2} \| Ax^{k+1} + Bz - c + \frac{\mu^k}{\rho} \|_2^2 \right)$} \text{//Update $z$}
\STATE {$\mu^{k+1} = \mu^k + \rho (Ax^{k+1} + Bz^{k+1} - c)$} \text{// Update $\mu$}
\STATE{$k = k + 1$} 
\ENDWHILE
\RETURN $x^{k+1}$
\end{algorithmic}
\end{algorithm}

\section{Related work}
\label{sec3}
To formulate the restoration framework, we propose a hybrid regularization model 
that alternates between first-order and higher-order TV terms. 
For convenience, we introduce a switching function $\sigma(i)$. 
The objective function takes the following form:
\begin{equation}
\label{Coupled problem}
\mathcal{J}(u)
= \frac{1}{2}\,\|Ku-f\|_2^2
+ \lambda_{\sigma(i)}\, \mathrm{TV}_{\sigma(i)}(u),
\qquad i=1,2\dots
\end{equation}
where
\[
\sigma(i)=
\begin{cases}
1, & \text{$i$ is odd}\\
2, & \text{$i$ is even}
\end{cases}
\]
which selects the type of TV regularizer used in the $i$-th stage. 
Specifically, $\mathrm{TV}_1(u)=\phi(Du)$ and $\mathrm{TV}_2(u)=\phi(DDu)$ 
denote respectively the first- and higher- order TV regularizer with $\lambda_1$ and $\lambda_2$ being their corresponding regularization weights. 
Here, $K$ denotes the blurring operator and $f$ is the degraded observation. 
The switching function $\sigma(i)$ ensures that the two regularizers 
are not minimized simultaneously but are instead updated sequentially in an alternating manner.

As the first step in our alternating optimization strategy, we first minimize the first-order TV sub-problem associated with the term  $\lambda_1 \mathrm{TV}_1(u)$. This is equivalent to solving the classical ROF model via the ADMM algorithm described in ~\cite{wu2010augmented}, and the details are given in \cref{algo2}.

\begin{algorithm}
\caption{ADMM for the optimization problem \cref{ROF model}}
\label{algo2}
\begin{algorithmic}

\STATE{Require $f$, $K$ and parameters $\lambda>0$, $\omega>0$.}
\STATE{Initialize $u_0=f$, $k=0$, $\rho=2$, $y_i = 0$ for $i=1,2$.}
\WHILE{stopping condition is not satisfied}
\STATE{${u}^{k+1} = \mathcal{F}^{-1}(\frac{ \mathcal{F}{(K^T)}\mathcal{F}({f})- \mathcal{F}{(D^T)}\mathcal{F}(\rho v^k + y^k )}{\mathcal{F}({K^T})\mathcal{F}({K}) + \rho\mathcal{F}(D^T)\mathcal{F}(D)})$} \text{ // Update $x$}
\STATE{$v^{k+1} = \max \left\{0, \|x^{k+1}\|_2 - \frac{\lambda}{\rho} \right\} \cdot sign(x^{k+1})$} \text{//Update $v$}
\STATE {$y^{k+1} = y^k + \rho (v^{k+1} - Du^{k+1})$} \text{// Update $y$}
\STATE{$k = k + 1$} 
\ENDWHILE
\RETURN $x^{k+1}$
\end{algorithmic}
\end{algorithm}

The restored result from the first-order TV sub-problem is then passed to the second stage, where we solve the higher-order TV sub-problem corresponding to the term  \( \lambda_2 \mathrm{TV}_2(u) \). This stage aims to further suppress staircase artifacts and enhance smoothness in homogeneous regions. To achieve this, we formulate the second-stage problem as the minimization of the following energy functional:

\begin{equation}
\label{Hotv}
\begin{split}
 \min \limits_u \frac{1}{2} \| Ku - f \|_2^2 + \lambda \cdot \phi(DDu)
\end{split}
\end{equation}
where $\lambda>0$ is the regularization parameter.

To facilitate solving problem \cref{Hotv}, we define an intermediate variable ${v}$ and recast the problem into a constrained formulation.
\begin{equation}
\label{Constrained problem}
\begin{split}
\min\limits_{{u,v}} \frac{1}{2} \| {Ku - f} \|_2^2 + \lambda \cdot \phi({v}), \ \text{s.t.} \ {v} = {DDu}.
\end{split}
\end{equation}

The Lagrange multipliers ${\mu}$ and the penalty parameter $\beta$ are introduced. Therefore, the augmented Lagrangian function of problem \cref{Constrained problem} is
\begin{equation}
\label{augmented Lagrangian function}
    \begin{split}
    L_{\beta}({u},{v},{\mu}) = \frac{1}{2} \, \|{Ku-f}\|_2^2 + \lambda \cdot \phi({v}) - {\mu}^T({v-DD}{u})
    +\frac{\beta}{2} \, \|{v-DDu}\|_2^2
    \end{split}  
\end{equation}

In problem \cref{augmented Lagrangian function}, the intermediate variables ${v}$ and Lagrange multipliers ${\mu}$ can  be described as 
\begin{align}
    \begin{aligned}
     {v} = [{v_1},{v_2},{v_3},{v_4}]^T,\ \  {\mu}= [{\mu_1},{\mu_2},{\mu_3},{\mu_4}]^T
    \end{aligned}
\end{align}
respectively, and
\begin{equation}
    \|{v}\|_2 = \sum\limits_{i\leq M,j\leq N} \sqrt{{(v_1)}_{i,j}^2 + {(v_2)}_{i,j}^2 + {(v_3)}_{i,j}^2 + {(v_4)}_{i,j}^2}
\end{equation}
 
To obtain the saddle point of problem \cref{augmented Lagrangian function},
 we use the ADMM method to iteratively and alternately solve the following  three sub-problems
\begin{align}
    \left\{
    \begin{aligned}
    u^{k+1} =& \arg \min \limits_u \frac{1}{2} \| Ku - f \|_2^2  - \langle \mu^k,v^k-DDu \rangle +\frac{\beta}{2} \| v^k - DDu \|_2^2,  \\
    v^{k+1} =& \arg \min\limits_v \lambda \cdot  \, \phi(v) - \langle \mu^k,v-DDu^{k+1} \rangle+ \frac{\beta}{2} \| v - DDu^{k+1} \|_2^2, \\
    \mu^{k+1} =& \mu^k - \beta (v^{k+1} - DDu^{k+1}).
    \end{aligned}
    \right.
\end{align}
The following steps provide a detailed solution process for the given problems.

\subsection{u sub-problem} fixing  $v$ and $\mu$, the sub-problem for $u^{k+1}$ can be rewritten as:
\begin{equation}
\label{u-subproblem}
\begin{split}
    u^{k+1} =& \arg \min\limits_{u}  \frac{1}{2} \| Ku - f \|_2^2  
    + \frac{\beta}{2} \left\| v^k - DDu - \frac{\mu^k}{\beta} \right\|_2^2
\end{split}
\end{equation}

By solving the sub-problem \cref{u-subproblem}, we have
\begin{equation}
\label{linear system}
\begin{split}
    ( K^T&K + \beta (DD)^T DD) u^{k+1} 
    =  K^T f +\beta (DD)^T v^k - (DD)^T \mu^k
\end{split}
\end{equation}

Since periodic boundary conditions are applied, and the matrices $K^TK$ and $(DD)^TDD$ are block circulant \cite{wang2008new}, the linear system \cref{linear system} can be efficiently solved using the Fast Fourier Transform (FFT), and the solution for $u^{k+1}$ is obtained via element-wise division.

\begin{equation}
{u}^{k+1} =  \mathcal{F}^{-1} \left( \frac{ \mathcal{F}(K^T) \mathcal{F}(f) + \mathcal{F}(DD)^T \mathcal{F}(\beta v^k - \mu^k)}{ \mathcal{F}(K^T) \mathcal{F}(K) + \beta \mathcal{F}(DD)^T\mathcal{F}(DD)} \right)
\end{equation}
where $\mathcal{F}$ and $\mathcal{F}^{-1}$ respectively represent FFT and its inverse transform.

\subsection{v sub-problem}fixing variable $u$ and $\mu$. The $v$-sub-problem can be rewritten as 
\begin{equation}
\label{v-subproblem}
\begin{split}
    v^{k+1} =& \arg \min\limits_{v}  \lambda \cdot \phi(v) 
    +\frac{\beta}{2} \left\| v - (DDu^{k+1} + \frac{\mu^k}{\beta}) \right\|_2^2
\end{split}
\end{equation}

Let $x^{k+1} = DDu^{k+1} + \frac{\mu^k}{\beta}$, the solution of Problem \cref{v-subproblem} can be obtained using the well-known shrinkage formula. 
\begin{equation}
    \begin{split}
        v^{k+1} =  \mathrm{shrink} \, (x^{k+1}, \frac{\lambda}{\beta})
        =\max \, \{0,\|{x}^{k+1}\|_2 - \frac{\lambda}{\beta}\} \cdot sign(x^{k+1})  \\
    \end{split}
\end{equation}
where $\mathrm{shrink} \,(\cdot)$ denotes the shrinkage operator and $\|x^{k+1}\|_2$ is given by
\begin{equation}
\|x^{k+1}\|_2 = \sum\limits_{i=1}^{M} \sum\limits_{j=1}^{N} 
\sqrt{(x_1^{k+1})_{i,j}^2 + (x_2^{k+1})_{i,j}^2 + (x_3^{k+1})_{i,j}^2 + (x_4^{k+1})_{i,j}^2}
\end{equation}
\begin{comment}
    \subsection{$\mu$ sub-problem} The Lagrangian multiplier $\mu$ must be updated simultaneously with each sub-problem. The following formulation of $\mu^{k+1}$ is employed
\begin{equation}
    {\mu}^{k+1} = {\mu}^k - \beta \left( {v}^{k+1} - {DDu}^{k+1} \right)
\end{equation}
\end{comment}

Hence, the detailed process for the higher-order TV model \cref{LLT model} is presented in ~\cref{algo3}. 

\begin{algorithm}
\caption{ADMM for the optimization problem \cref{LLT model}}
\label{algo3}
\begin{algorithmic}

\STATE{Require $f$, $K$ and parameters $\lambda > 0$, $\omega>0$.}
\STATE{Initialize $u_0 = f$, $k = 0$, $\beta =2$ and $\mu_j = 0$ for $j = 1, 2, 3, 4$.}
\WHILE{stopping condition is not satisfied}
\STATE{${u}^{k+1} = \mathcal{F}^{-1}\left( \frac{ \mathcal{F}(K^T)\mathcal{F}(f) + \mathcal{F}(D D^T)\mathcal{F}(\beta v^k - \mu^k)}{ \mathcal{F}(K^T)\mathcal{F}(K) - \beta \mathcal{F}(D D^T)\mathcal{F}(D D^T)} \right)$} \text{ // Update $u$}
\STATE{$v^{k+1} = \max \left\{0, \|x^{k+1}\|_2 - \frac{\omega}{\beta} \right\} \cdot \text{sign}(x^{k+1})$} \text{//Update $v$}
\STATE {$\mu^{k+1} = \mu^k - \beta (v^{k+1} - D D u^{k+1})$} \text{// Update $\mu$}
\STATE{$k = k + 1$} 
\ENDWHILE
\RETURN $u^{k+1}$
\end{algorithmic}
\end{algorithm}

Building upon this, we propose a stage-wise alternating restoration algorithm, where each stage solves a variational problem with either first-order or high-order total variation regularization. The alternation is governed by the principle of semi-convergence: once the residual stops decreasing and starts increasing, the iteration halts and transitions to the next stage. This strategy avoids overfitting noise and enables adaptivity to different image features. The full method alternates between the two stages until a total iteration cap is reached or the accuracy setting is achieved. The stage-wise alternating method proposed is succinctly summarized in \cref{algo4}.

\begin{algorithm}
\caption{Stage-wise alternating framework for image restoration}
\label{algo4}
\begin{algorithmic}
\STATE{Require Noisy image $f$, blurring operator $K$, regularization parameters $\lambda_1, \lambda_2$, maximum outer iterations $N_{\max}$.}
\STATE{Ensure Restored image $u$.}
\STATE{Initialize $u_0^{(k)} = f$, $k=0$, $N = 1$.}
\WHILE{$N \le N_{\max}$}
%\STATE{\textbf{Stage 1: First-order TV regularizer}} 
\STATE{$u_N^{(k+1)} = \textbf{Algorithm 2}(u_{N-1}^{(k)}, K, \lambda_1)$} \text{// First-order TV regularization stage }
  \IF{$\|Ku_N^{(k+1)} - f\| > \|Ku_N^{(k)} - f\|$}
    \STATE{Return $u_N^{(k)}$}  \text{// semi-convergence analysis}
  \ENDIF
  \IF{$N \geq N_{\max}$}
  \STATE{Return $u_N^{(k)}$} \text{// Check if $N_{max}$ is reached}
  \ENDIF
\STATE{$N \gets N+1$} \text{// $N+1$}
\STATE{$u_N^{(k+1)} = \textbf{Algorithm 3}(u_{N-1}^{(k)}, K, \lambda_2)$} \text{// High-order TV regularization stage}
  \IF{$\|Ku_N^{(k+1)} - f\| > \|Ku_N^{(k)} - f\|$}
    \STATE{Return $u_N^{(k)}$}  \text{// semi-convergence analysis}
  \ENDIF
  \IF{$N \geq N_{\max}$}
    \STATE{Return $u_N^{(k)}$} \text{// Check if $N_{max}$ is  reached}
  \ENDIF
    \STATE{$N \gets N+1$}  \text{// $N+1$}
\ENDWHILE
\RETURN $u_N^{(k)}$
\end{algorithmic}
\end{algorithm}

To facilitate the convergence analysis of the proposed algorithm, we introduce following notations.

we define $\mathcal{J}_1(u) = \lambda_1TV_1(u),\mathcal{J}_2(u)=\lambda_2TV_2(u)$ and then the optimization problem \cref{Coupled problem} can be divided into independent two part
\begin{equation}
\label{J(u)}
    \min_u \;  \mathcal{J}_i(u) + H(u),  \  \ {i} = {1,2}
\end{equation}
where $\mathcal{J}_i$ is proper, convex and lower semi-continuous, $H$ is convex and continuously differentiable. At the same time, the approximation of  problem  is given as follows
\begin{equation}
\label{2}
    \min_u \mathcal{J}(u) + \frac{\delta_k}{2}\| u - (u^k-\frac{1}{\delta_k}K^T(Ku^k-f))\|^2_2
\end{equation}

Building upon prior works in \cite{bredies2008linear,hong2017linear,xie2014admm}, we introduce the following lemmas.

\begin{lemma}
\label{Lemma1}
Assume that H is Lipschitz continuous with associated Lipschitz constant L, $u^*$ is the solution of problem \cref{J(u)}, $u^{k}$ is the solution of the problem \cref{2}. If, for each $M \in R$ there exists a constant $\sigma(M)>0$ such that
    \begin{equation}
        \| \bar{v}-u^*\| \le \sigma(M)(R(\bar{v}+T(\bar{v})))
    \end{equation}
    for each $\bar{v}$ satisfying $(\mathcal{J}+H)(u)\le M$ and $\bar{v}$,$R(\bar{v})$ and $T(\bar{v})$ defined by 
    \begin{align}
        \bar{v} = \arg\min_v \mathcal{J}(v) + \frac{\delta}{2} \|v-u+\frac{1}{\delta}H'(u)\|^2_2 \\
        R(\bar{v}) = \langle H'(u^*),\bar{v}-u^*\rangle + \mathcal{J}(\bar{v}-\mathcal{J}(u^*)),\\
        T(\bar{v}) = H(\bar{v})-H(u^*)-\langle H'(u^*),\bar{v}-u^* \rangle
    \end{align}
    then ${u^{k}}$ converges linearly to the unique minimizer $u^*$.
\end{lemma}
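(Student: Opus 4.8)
The plan is to recognize the scheme \cref{2} as a forward--backward (proximal-gradient) step and then to run the standard error-bound argument for linear convergence. Writing $H(u)=\frac12\|Ku-f\|_2^2$, so that $H'(u)=K^T(Ku-f)$ with $\nabla H$ Lipschitz of modulus $L=\|K\|_2^2$ (this is how "Lipschitz with constant $L$" must be read), the minimizer of \cref{2} is exactly $u^{k+1}=\bar v(u^k)$, i.e.\ the map $\bar v$ in the statement evaluated at $u=u^k$ with $\delta=\delta_k$; I assume throughout the step-size regime $\delta_k\ge L$ needed for monotonicity. The first step is a purely algebraic observation: since $u^*$ solves \cref{J(u)} we have $-H'(u^*)\in\partial\mathcal{J}(u^*)$, whence both $R(\bar v)\ge 0$ and $T(\bar v)\ge 0$, and moreover $R(\bar v)+T(\bar v)=(\mathcal{J}+H)(\bar v)-(\mathcal{J}+H)(u^*)$. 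Thus, writing $F:=\mathcal{J}+H$ and $\Delta_k:=F(u^k)-F(u^*)$, the hypothesis is precisely the sharp error bound $\|u^{k+1}-u^*\|\le\sigma(M)\,\Delta_{k+1}$, and it already forces $u^*$ to be the unique minimizer.

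Next I would invoke the fundamental proximal-gradient descent inequality: for $\delta_k\ge L$ and every $z$, $F(u^{k+1})\le F(z)+\frac{\delta_k}{2}\|u^k-z\|_2^2-\frac{\delta_k}{2}\|u^{k+1}-z\|_2^2$. Taking $z=u^k$ gives the sufficient-decrease estimate $F(u^k)-F(u^{k+1})\ge\frac{\delta_k}{2}\|u^{k+1}-u^k\|_2^2$, so $\{F(u^k)\}$ is non-increasing; hence every iterate lies in the sublevel set $\{F\le M\}$ with $M:=F(u^0)$, which is what makes the error-bound constant $\sigma(M)$ uniform in $k$, and summing yields $\sum_k\|u^{k+1}-u^k\|_2^2<\infty$, so $\|u^{k+1}-u^k\|_2\to 0$. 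Taking instead $z=u^*$ gives $\Delta_{k+1}\le\frac{\delta_k}{2}\bigl(\|u^k-u^*\|_2^2-\|u^{k+1}-u^*\|_2^2\bigr)$, so the distances $\|u^k-u^*\|_2$ are non-increasing and $\{u^k\}$ is bounded.

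To close the loop I would first show $\Delta_k\to 0$ and then extract the rate. From the optimality condition of \cref{2}, $\xi_{k+1}:=-\delta_k(u^{k+1}-u^k)-H'(u^k)\in\partial\mathcal{J}(u^{k+1})$, so $g_{k+1}:=\xi_{k+1}+H'(u^{k+1})\in\partial F(u^{k+1})$ with $\|g_{k+1}\|_2\le(\delta_k+L)\|u^{k+1}-u^k\|_2\to 0$; convexity then gives $\Delta_{k+1}\le\langle g_{k+1},u^{k+1}-u^*\rangle\le\|g_{k+1}\|_2\,\|u^{k+1}-u^*\|_2\to 0$ using boundedness. Now combine the $z=u^*$ inequality with the error bound at index $k$: $\Delta_{k+1}\le\frac{\delta_k}{2}\|u^k-u^*\|_2^2\le\frac{\delta_k\,\sigma(M)^2}{2}\Delta_k^2$. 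Since $\Delta_k\downarrow 0$, once $\Delta_{k_0}$ is small enough the factor $\frac{\delta_k\sigma(M)^2}{2}\Delta_k$ drops below $1$, giving a genuine contraction $\Delta_{k+1}\le\gamma\Delta_k$ with $\gamma<1$; applying the error bound once more, $\|u^k-u^*\|_2\le\sigma(M)\Delta_k$, transfers this Q-linear decay of the gap into R-linear convergence of $u^k\to u^*$.

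The main obstacle is the bookkeeping that turns the single-step hypothesis into a per-iteration contraction without circularity: the distance $\|u^{k+1}-u^*\|$, the step $\|u^{k+1}-u^k\|$, the current gap $\Delta_{k+1}$, and the previous gap $\Delta_k$ must be chained in the right order, and the error-bound constant must be shown uniform along the trajectory — both handled by the descent inequality that confines the iterates to $\{F\le M\}$. I also need the implicit step-size condition $\delta_k\ge L$. It is worth noting that the stated bound is of sharp (linear-in-gap) type and is in fact stronger than needed (the subgradient lower bound $\|g_{k+1}\|_2\ge 1/\sigma(M)$ for $u^{k+1}\neq u^*$, combined with $\|u^{k+1}-u^k\|_2\to 0$, even forces finite termination); delivering the advertised linear rate is therefore comfortable, and the care lies entirely in the uniformity and chaining above.
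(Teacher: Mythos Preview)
The paper does not actually prove this lemma: it is quoted from \cite{bredies2008linear} as a known result (the surrounding text reads ``Building upon prior works \ldots we introduce the following lemmas''), so there is no in-paper argument to compare against. Your proof is essentially the standard Bredies--Lorenz argument and is correct in substance: the identification of \cref{2} as a forward--backward step, the algebraic identity $R(\bar v)+T(\bar v)=F(\bar v)-F(u^*)$ (after repairing the obvious misprints in the statement), the three-point descent inequality that confines the iterates to a fixed sublevel set and makes the constant $\sigma(M)$ uniform, and the chaining $\Delta_{k+1}\le\tfrac{\delta_k}{2}\|u^k-u^*\|_2^2\le\tfrac{\delta_k\sigma(M)^2}{2}\Delta_k^2$ are all sound. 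Your closing remark is also well taken: the hypothesis as literally written, with $\|\bar v-u^*\|$ rather than $\|\bar v-u^*\|^2$ on the left, is the \emph{sharp} error bound and is strictly stronger than what Bredies--Lorenz assume; combined with $\|g_{k+1}\|\to0$ it indeed forces finite termination, so the promised linear rate is a fortiori.

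Two small points of bookkeeping. First, the descent inequality you use requires $\delta_k\ge L$, which is not part of the stated hypotheses; you are right to flag it as an implicit assumption, but it should be said explicitly that the lemma as written is incomplete without it. Second, when you apply the error bound to $u^k$ in the chain above, you need $u^k$ itself to be a $\bar v$; this holds for $k\ge1$ (since $u^k=\bar v(u^{k-1})$) but not necessarily for $u^0$, so the contraction starts at $k=1$---a harmless index shift, but worth noting.
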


\begin{lemma}
\label{Lemma2}
The sequence of iterates $\{u^k\}$ generated by the ADMM Algorithm \cref{algo2} converges linearly to the solution of the  problem \cref{ROF model}.
\end{lemma}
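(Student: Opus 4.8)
The plan is to recognize Algorithm~\ref{algo2} as a concrete instance of the abstract proximal scheme analyzed in \cref{Lemma1}, and then to verify that the ROF objective satisfies the hypotheses of that lemma. First I would put \cref{ROF model} into the form of the $i=1$ case of \cref{J(u)} by setting $H(u)=\frac{1}{2}\|Ku-f\|_2^2$ and $\mathcal{J}(u)=\mathcal{J}_1(u)=\lambda_1\phi(Du)$. Here $H$ is convex and continuously differentiable with $H'(u)=K^T(Ku-f)$, whose gradient is Lipschitz with constant $L=\|K^TK\|_2=\|K\|_2^2$, while $\mathcal{J}$ is proper, convex and lower semicontinuous, being the composition of the (isotropic) TV seminorm with the fixed linear operator $D$. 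This places the problem exactly in the structural setting required by \cref{Lemma1}.

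Next I would connect the ADMM updates of Algorithm~\ref{algo2} to the proximal--linearized iteration \cref{2}. The $u$-update is a single linear solve, the $v$-update is the shrinkage step, and the multiplier update closes the loop; following \cite{bredies2008linear,hong2017linear,xie2014admm}, these three steps taken together realize one step of the operator studied in \cref{Lemma1} applied to the primal--dual pair of \cref{ROF model}. I would make the identification explicit by eliminating $v$ and $y$ so as to write $u^{k+1}$ as the minimizer defining $\bar v$ in \cref{Lemma1}, so that the ADMM sequence and the sequence $\{u^k\}$ generated by \cref{2} coincide, up to the fixed preconditioner induced by $\beta$ and $D^TD$ in the $u$-subproblem \cref{u-subproblem}.

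The decisive step is to verify the error-bound hypothesis of \cref{Lemma1}: that on each sublevel set $\{(\mathcal{J}+H)\le M\}$ there exists $\sigma(M)>0$ with $\|\bar v-u^*\|\le\sigma(M)\,(R(\bar v)+T(\bar v))$. I would derive this from the structure of the ROF functional. Since $\phi(D\,\cdot)$ is a convex gauge-type (piecewise-linear-quadratic) penalty composed with a fixed linear map and $H$ is a convex quadratic, the sum $\mathcal{J}+H$ satisfies the Luo--Tseng local error bound for composite convex minimization; equivalently, the subdifferential of $\mathcal{J}+H$ is metrically subregular at $u^*$. This is precisely what makes the proximal residual $\bar v-u$ control the distance to the solution through the quantities $R$ and $T$. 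For uniqueness of $u^*$, the denoising case $K=I$ makes $H$ strongly convex, so $u^*$ is unique; for deblurring, uniqueness and coercivity follow from the periodic-boundary construction used here, under which $\ker K\cap\ker D=\{0\}$ and the objective is strictly convex on the relevant subspace.

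With these ingredients in place, \cref{Lemma1} yields that $\{u^k\}$, and therefore the ADMM sequence produced by Algorithm~\ref{algo2}, converges linearly to the unique minimizer $u^*$ of \cref{ROF model}. I expect the main obstacle to be the error-bound verification together with the rigorous identification of the ADMM iterates with the scheme \cref{2}: because the isotropic TV term is a sum of Euclidean norms rather than a polyhedral function, the error bound must be obtained via metric subregularity instead of a direct polyhedral argument, and the implicit preconditioner in the ADMM $u$-update must be reconciled with the plain proximal step of \cref{2}. The remaining verifications (the Lipschitz constant, convexity, lower semicontinuity, and coercivity) are routine.
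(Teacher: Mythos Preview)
The paper does not actually prove \cref{Lemma2}; it is stated without proof, with the preceding sentence ``Building upon prior works in \cite{bredies2008linear,hong2017linear,xie2014admm}, we introduce the following lemmas'' serving as the sole justification. Your proposal attempts far more than the paper does.

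That said, your reduction contains a genuine gap. Algorithm~\ref{algo2} is \emph{not} an instance of the proximal--linearized scheme \cref{2} that \cref{Lemma1} analyzes. In ADMM the $u$-update is a pure quadratic solve with normal operator $K^TK+\rho D^TD$ (no TV appears), while the nonsmooth term enters only through the shrinkage in the $v$-update and the multiplier correction. By contrast, one step of \cref{2} requires evaluating the proximal map of the full TV functional $\mathcal{J}_1=\lambda_1\phi(D\,\cdot)$ at a forward gradient point. Eliminating $v$ and $y$ does not turn the former into the latter: what you obtain after elimination is a fixed-point iteration equivalent to Douglas--Rachford on the dual, not forward--backward on the primal, and the resulting $u$-sequence does not coincide with the sequence defined by \cref{2}. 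Consequently \cref{Lemma1} (which is the Bredies--Lorenz result \cite{bredies2008linear} for forward--backward splitting) is the wrong tool here. The linear convergence of ADMM in \cite{hong2017linear} is obtained by a direct error-bound analysis of the ADMM primal--dual residual, not by any identification with proximal gradient; if you want an actual proof, that is the route to follow.

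A second, smaller gap you already flag yourself: the isotropic TV term $\phi(Du)$ is a sum of Euclidean norms and is not piecewise linear--quadratic, so the Luo--Tseng error bound does not apply off the shelf. Metric subregularity for this class must be argued separately, and the paper gives no indication of how.
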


\begin{lemma}
\label{Lemma3}
The sequence of iterates $\{u^k\}$ generated by the ADMM Algorithm \cref{algo3} converges linearly to the solution of the problem \cref{LLT model}.
\end{lemma}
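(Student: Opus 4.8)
The plan is to cast problem \cref{LLT model} into the abstract template \cref{J(u)} and then apply \cref{Lemma1}, proceeding exactly as in the first-order case treated in \cref{Lemma2}, but with the first-order operator $D$ replaced by the second-order operator $DD$. Concretely, I set $H(u)=\frac12\|Ku-f\|_2^2$ and $\mathcal{J}(u)=\lambda\,\phi(DDu)$, so that the higher-order model is precisely $\min_u \mathcal{J}(u)+H(u)$. The forward--backward (linearized-proximal) iteration \cref{2} with step $\delta_k$ is the reference scheme to which \cref{Lemma1} applies, and the task reduces to (i) checking the standing regularity hypotheses, (ii) identifying the ADMM iterates produced by \cref{algo3} with this scheme, and (iii) verifying the error-bound condition that drives \cref{Lemma1}.

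First I would verify the hypotheses of \cref{Lemma1}. The map $H$ is convex and continuously differentiable with $H'(u)=K^{T}(Ku-f)$, whose gradient is Lipschitz with constant $L=\|K^{T}K\|_2$; this is independent of whether the regularizer is first- or higher-order, so it transfers verbatim from \cref{Lemma2}. The regularizer $\mathcal{J}(u)=\lambda\,\phi(DDu)$ is a finite sum of Euclidean norms of the four second-order differences composed with the linear operator $DD$, hence proper, convex and lower semicontinuous. I would also record that the minimizer $u^{*}$ is unique: under the periodic boundary conditions used here the objective is strictly convex off $\ker K\cap\ker DD$, and since a blurring kernel with nonzero mean and the discrete Hessian $DD$ share no nontrivial common null vector, one has $\ker K\cap\ker DD=\{0\}$, so uniqueness follows (for denoising, $K=I$, this is immediate).

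Next I would connect \cref{algo3} to the reference iteration. The $u$- and $v$-updates of \cref{algo3}, together with the multiplier update, form the ADMM splitting of the constrained reformulation $v=DDu$; by the standard equivalence between ADMM and the proximal/forward--backward iteration underlying \cref{2}, the ADMM sequence $\{u^k\}$ shares the same limit and contraction factor as the sequence generated by \cref{2} for the corresponding step size. This is exactly the reduction carried out for \cref{algo2} in the proof of \cref{Lemma2}, and the argument is insensitive to replacing $D$ by $DD$.

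The main obstacle is the error-bound inequality $\|\bar v-u^{*}\|\le\sigma(M)\,(R(\bar v)+T(\bar v))$ required by \cref{Lemma1}. Unlike a polyhedral penalty, $\phi(DDu)$ is a group norm (a sum of $\ell_2$-norms of four-component blocks), so the bound cannot be read off from polyhedrality alone. I would instead invoke the Luo--Tseng error bound for convex composite problems of the form $h(Ku)+\mathcal{J}(u)$ with $h$ quadratic and $\mathcal{J}$ a group-sparsity penalty: such objectives are semialgebraic and enjoy metric subregularity of the subdifferential at $u^{*}$, which yields the calmness constant $\sigma(M)$ on each sublevel set $\{\,(\mathcal{J}+H)(u)\le M\,\}$. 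The only change from the first-order analysis is that $D$ is replaced by $DD$ and $v$ acquires four rather than two components; this affects the numerical values of $\sigma(M)$ and $L$ but not the structural validity of the bound. With this condition in hand, \cref{Lemma1} applies and gives linear convergence of $\{u^k\}$ to the unique minimizer $u^{*}$ of \cref{LLT model}, completing the proof.
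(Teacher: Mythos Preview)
The paper does not actually supply a proof of \cref{Lemma3} (nor of \cref{Lemma2}); both are stated as imported facts ``building upon prior works'' \cite{bredies2008linear,hong2017linear,xie2014admm}. In particular, the intended justification for \cref{Lemma3} is the direct ADMM linear-convergence theory of Hong--Luo and related papers, applied to the two-block splitting in \cref{algo3}. \cref{Lemma1}, by contrast, is a statement about the forward--backward iteration \cref{2}, and the paper does not route \cref{Lemma3} through \cref{Lemma1}.

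Your proposal contains a genuine gap at step (ii). You assert a ``standard equivalence between ADMM and the proximal/forward--backward iteration underlying \cref{2}'', and conclude that the ADMM iterates $\{u^k\}$ from \cref{algo3} inherit the contraction of \cref{Lemma1}. There is no such equivalence in general: ADMM is equivalent to Douglas--Rachford on the dual, not to proximal gradient, and the $u$-update in \cref{algo3} solves a linear system involving $K^{T}K+\beta(DD)^{T}DD$ rather than taking a single gradient step on $H$. Consequently the ADMM sequence is \emph{not} an instance of \cref{2}, and \cref{Lemma1} does not apply to it as written. Your verification of the hypotheses on $H$ and $\mathcal{J}$, and your remarks on the error-bound condition for group-sparse composite objectives, are reasonable in isolation, but they feed into the wrong abstract scheme.

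To repair the argument you should bypass \cref{Lemma1} and invoke an ADMM-specific linear-convergence result directly: for two-block convex ADMM with a strongly-convex-plus-polyhedral or error-bound structure, \cite{hong2017linear} (and the variants in \cite{xie2014admm}) give $Q$-linear convergence of the primal--dual residual, hence of $\{u^k\}$. The structural checks you would need---full column rank of the constraint block, Lipschitz smoothness of the quadratic term, and an error bound for the group-$\ell_1$ composite---are essentially the ones you already identified; only the target theorem changes.
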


For the purpose of analyzing the convergence of the proposed method, 
we assume the existence of a neighborhood $B(u,\delta)$ of the ground truth image $u$ 
such that both optimal solutions $u^*_1$ and $u^*_2$ are contained in $B(u,\delta)$. 

\begin{theorem}
\label{Theorem}
The sequence of iterates $\{u^k_N\}$ converges to the neighborhood of ground image $u$, where the $u^k_N$ is generated by the \cref{algo4}. 
\end{theorem}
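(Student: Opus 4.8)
The plan is to combine the per-stage linear convergence supplied by \cref{Lemma2} and \cref{Lemma3} with the monotonicity enforced by the semi-convergence stopping rule in \cref{algo4}, and then to localize the limit inside $B(u,\delta)$ using the standing assumption $u_1^*,u_2^*\in B(u,\delta)$. I would organize the argument around three claims: (i) the residual sequence $\{\|Ku_N^{(k)}-f\|_2\}$ is monotonically non-increasing across the stage index $N$; (ii) the iterate sequence $\{u_N^{(k)}\}$ is bounded; and (iii) every accumulation point lies in $B(u,\delta)$. Claims (i)--(ii) together give a convergent residual sequence and a compact arena, and claim (iii) then upgrades this into the stated convergence into the neighborhood of the ground truth.

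First I would establish the monotone decrease. By construction, each stage returns the last iterate $u_N^{(k)}$ produced before the residual first increases; hence within any stage the returned iterate has residual no larger than the stage's initial point, which is precisely the output of the previous stage. Chaining this inequality over the stage index $N$ yields a non-increasing residual sequence that is bounded below by $0$, so it converges. This is the rigorous counterpart of the ``select-and-propagate'' mechanism: the explicit comparison $\|Ku_N^{(k+1)}-f\|>\|Ku_N^{(k)}-f\|$ in \cref{algo4} guarantees that no transition ever raises the data-fidelity value.

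Next I would invoke the inner convergence results to obtain boundedness and localization simultaneously. Within an odd (respectively even) stage the inner ADMM iterates converge linearly to $u_1^*$ (respectively $u_2^*$) by \cref{Lemma2} (respectively \cref{Lemma3}); consequently each stage output lies within a bounded ball around the corresponding fixed optimum $u_{\sigma(N)}^*$, and since both optima are bounded this furnishes claim (ii). Using boundedness I would extract a convergent subsequence $u_{N_j}^{(k_j)}\to\bar u$; the linear estimate from the active lemma forces $\|\bar u-u_{\sigma(N_j)}^*\|$ to be controlled by the contraction factor, so $\bar u$ sits within a small ball of either $u_1^*$ or $u_2^*$. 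By the assumption $u_1^*,u_2^*\in B(u,\delta)$ and the triangle inequality, $\bar u\in B(u,\delta')$ for a slightly enlarged radius. Because the residual sequence converges and the propagation step never increases it, all accumulation points share this property, giving convergence of $\{u_N^{(k)}\}$ into the neighborhood of $u$.

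The main obstacle I anticipate is step (iii): reconciling the \emph{local} semi-convergence of each stage---where the iterates first approach the ground truth $u$ and only later drift toward the stage optimum $u_{\sigma(N)}^*\neq u$---with a \emph{global} statement about the entire sequence. The stopping rule is exactly what prevents the drift, but to make the argument rigorous one must quantify how deep into $B(u,\delta)$ an iterate is allowed to travel before its residual turns upward, and then verify that alternating between the two regularizers does not eject the propagated iterate from $B(u,\delta)$. This requires either a uniform lower bound on the number of productive inner iterations per stage or an explicit estimate tying the residual monotonicity to the geometry of $B(u,\delta)$; handling this interaction between the two distinct fixed points $u_1^*$ and $u_2^*$, and ensuring the enlarged radius $\delta'$ does not accumulate across stages, is the crux of the proof.
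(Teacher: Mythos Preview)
Your plan is considerably more elaborate than what the paper actually does. The paper's proof is a three-line triangle-inequality bound: for odd $N$ it writes
\[
\|u_N^k - u\| \;\le\; \|u_N^k - u_1^*\| + \|u_1^* - u_2^*\| + \|u_2^* - u\| \;\le\; \sigma + 2\delta + \delta,
\]
invoking \cref{Lemma2} to bound the first term by some constant $\sigma$ and the standing assumption $u_1^*,u_2^*\in B(u,\delta)$ for the other two; the even-$N$ case is symmetric with $u_2^*$ in place of $u_1^*$. The conclusion is that every iterate lies in $B(u,\hat\delta)$ with $\hat\delta=\sigma+3\delta$. There is no monotone-residual argument, no subsequence extraction, and no appeal to the semi-convergence stopping rule at all.

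Your route---residual monotonicity, compactness, accumulation points---is aiming at a stronger dynamical statement about where the sequence eventually settles, whereas the paper is content with a uniform bound valid for \emph{every} $N$ and $k$. That is why you run into the obstacle you flag in your last paragraph: quantifying how far the inner iterations travel and preventing radius accumulation across stages is genuinely delicate, but the paper never engages with it because the direct triangle inequality makes the inter-stage dynamics irrelevant. The price the paper pays is that the bound $\|u_N^k - u_{\sigma(N)}^*\|\le\sigma$ is asserted rather than derived: \cref{Lemma2} and \cref{Lemma3} give linear convergence of the inner ADMM to $u_{\sigma(N)}^*$, but each stage runs only finitely many steps from a starting point that changes with $N$, so a uniform $\sigma$ independent of $N$ and $k$ is not actually justified. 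Your claim (ii) inherits exactly the same gap when you say ``each stage output lies within a bounded ball around the corresponding fixed optimum,'' so your extra machinery does not close it either---it just relocates it.
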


\begin{proof}[Proof]
Define that $\{u_N^k\}$ denotes the sequence generated by \cref{algo4} 
at the $k$-th iteration of the $N$-th stage. 
Let $u^*_1$ and $u^*_2$ denote respectively the solutions of the minimization problem \cref{ROF model} and problem \cref{LLT model}. 
Moreover, let $u$ denote the ground truth image. 
By \cref{Lemma2}, \cref{Lemma3} and the above assumptions, we proceed to prove the theorem as follows.

    \begin{align}
    \|u^k_N - u \| = & \| u^k_N - u^*_1 + u_1^* - u^*_2 + u^*_2 -u \| \\
    \le& \|u^k_N - u_1^*\| + \| u^*_1 - u^*_2\| + \|u^*_2 - u\| \\
    \le& \|u_N^k - u^*_1\| + 2\delta + \delta  \\
    \le& \sigma +3\delta \quad  \text{where}\quad N =2n-1,n=1,2,3,\cdots\\
    \|u^k_N - u \| = & \| u^k_N - u^*_2 + u_2^* - u^*_1 + u^*_1 -u \| \\
    \le& \|u^k_N - u_2^*\| + \| u^*_2 - u^*_1\| + \|u^*_1 - u\| \\
    \le& \|u_N^k - u^*_2\| + 2\delta + \delta  \\
    \le& \sigma + 3\delta \quad  \text{where}\quad N =2n,n=1,2,3,\cdots
\end{align}

According to the above discuss. we let $\hat{\delta} =\sigma +  3\delta$, then we have 
\begin{equation}
    \|u^k_N - u\| \le \hat{\delta} \quad\text{for all}\quad N
\end{equation}

Therefore, the sequence $\{u_N^k\}$ generated by \cref{algo4} converges to the neighborhood of the ground image $u$. To better understand the convergence process of the alternating iteration, we have drawn the flow diagram of the graphical convergence process, as shown in \cref{fig:tikz}.
\end{proof}

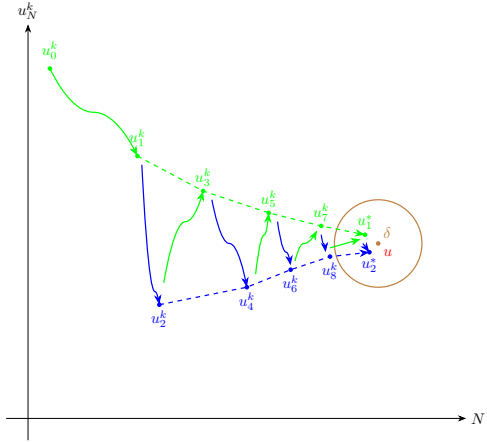
\begin{figure}[htbp]
  \centering
  \resizebox{0.5\textwidth}{!}{\begin{tikzpicture}[>=Stealth,scale=1]
% 坐标轴
\draw[->] (-0.5,0) -- (10,0) node[right] {$N$ };
\draw[->] (0,-0.5) -- (0,9) node[above] {$u_N^k$};

% 全局收敛目标 u
\draw[brown, thick] (8,4) circle (1);        
\fill[brown] (8,4) circle (1.5pt);           
\node[red] at (8,4) [below right] {$u$};  
\node[brown] at (8,4) [above right] {$\delta$};

% 第1次 u_1^* (绿色)
\fill[green] (2.5,6) circle (1.5pt) node[above,yshift=3pt] {$u_1^k$};
% 第1次 u_2^* (蓝色)
\fill[blue] (3,2.6) circle (1.5pt) node[below] {$u_2^k$};

% 第2次 u_1^* (绿色)
\fill[green] (4,5.2) circle (1.5pt) node[above] {$u_3^k$};
% 第2次 u_2^* (蓝色)
\fill[blue] (5,3) circle (1.5pt) node[below] {$u_4^k$};

% 第3次 u_1^* (绿色)
\fill[green] (5.5,4.7) circle (1.5pt) node[above] {$u_5^k$};
% 第3次 u_2^* (蓝色)
\fill[blue] (6,3.4) circle (1.5pt) node[below] {$u_6^k$};

% 第4次 u_1^* (绿色)
\fill[green] (6.7,4.4) circle (1.5pt) node[above] {$u_7^k$};
% 第4次 u_2^* (蓝色)
\fill[blue] (6.9,3.7) circle (1.5pt) node[below] {$u_8^k$};

% 第5次 u_1^* (绿色)
\fill[green] (7.7,4.2) circle (1.5pt) node[above] {$u_1^*$};
% 第5次 u_2^* (蓝色)
\fill[blue] (7.8,3.8) circle (1.5pt) node[below] {$u_2^*$};

% 收敛轨迹箭头
\draw[thick,green,->] (0.5,8) parabola bend (1.5,7) (2.5,6);
\draw[thick,blue,->] (2.6,5.8) parabola bend (2.9,2.9) (3,2.6);

\draw[thick,green,->] (3.1,3.1) parabola bend (3.6,4.5) (4,5.2);
\draw[thick,blue,->] (4.2,5.0) parabola bend (4.6,4.0) (5,3);

\draw[thick,green,->] (5.2,3.3) parabola bend (5.4,4.2) (5.5,4.7);
\draw[thick,blue,->] (5.7,4.5) parabola bend (5.9,3.8) (6,3.5);

\draw[thick,green,->] (6.1,3.6) parabola bend (6.3,4) (6.6,4.3);
\draw[thick,blue,->] (6.7,4.2) parabola bend (6.75,4) (6.8,3.8);

\draw[thick,green,->] (6.9,3.9) -- (7.6,4.1);
\draw[thick,blue,->] (7.65,4) -- (7.8,3.8);

% 虚线轨迹
\draw[->,green,dashed,thick] (2.5,6) -- (4,5.2) -- (5.5,4.7) -- (6.7,4.4)  -- (7.7,4.2);
\draw[->,blue,dashed,thick] (3,2.6) -- (5,3) -- (6,3.4) -- (6.9,3.7)  -- (7.8,3.8);

% 起始点
\fill[green] (0.5,8) circle (1.5pt) node[above,yshift=3pt] {$u_0^k$};

\end{tikzpicture}}
  \caption{The flow diagram  of the graphical convergence process}
  \label{fig:tikz}
\end{figure}

 Although the denoising framework is guided by the semi-convergence property to select intermediate iterates within each stage, \cref{Theorem} ensures that the overall iterative sequence ${u_N^k}$ converges to a neighborhood of the ground image $u$. This result is particularly significant, as traditional TV-based denoising methods often lack such global convergence guarantees and rely solely on heuristic stopping rules. The proof, while concise, confirms that the proposed scheme remains stable under the stage-wise regularization strategy.

\section{Numerical results}
\label{sec4}
In order to assess the performance of our proposed approach, we carried out numerical experiments on the test images illustrated in Figure~\ref{tested image}, considering three noise levels: $\sigma = 10$, $\sigma = 20$, and $\sigma = 30$.
Furthermore, a $3\times 3$ Gaussian kernel was applied to further blur the image. All experiments were performed on an Apple M1 with 8 GB of RAM. Practically, in this study, MATLAB (R2021b) is chosen to run the proposed procedure. PSNR \cite{mozhaeva2021full}, SSIM \cite{bakurov2022structural}, and FOM \cite{zhang2017noise} are the main quantitative analyses that are used to evaluate the proposed method. 

\begin{figure}[htbp]
\centering
\subfloat[]{\includegraphics[width=0.22\textwidth]{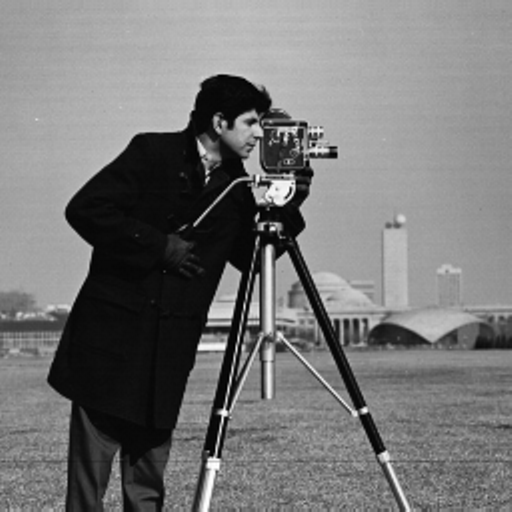}} \hspace{0.1cm}
\subfloat[]{\includegraphics[width=0.22\textwidth]{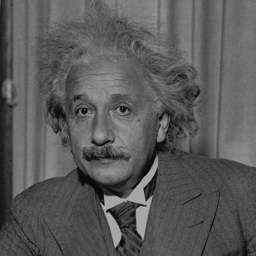}} \hspace{0.1cm}
\subfloat[]{\includegraphics[width=0.22\textwidth]{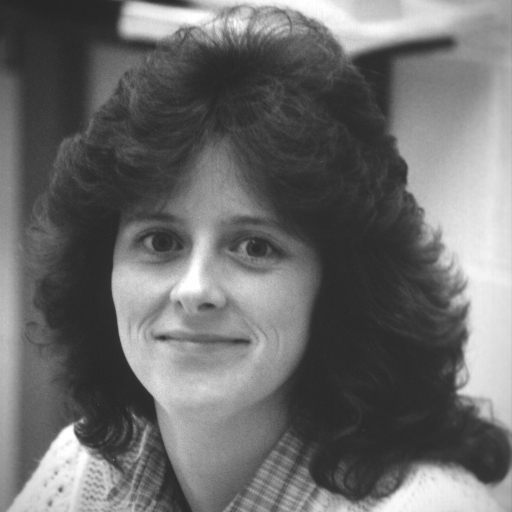}} \hspace{0.1cm}
\subfloat[]{\includegraphics[width=0.22\textwidth]{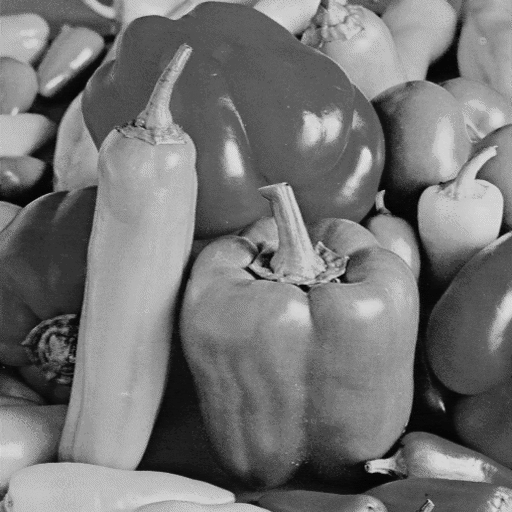}} \hspace{0.1cm}\\
\subfloat[]{\includegraphics[width=0.22\textwidth]{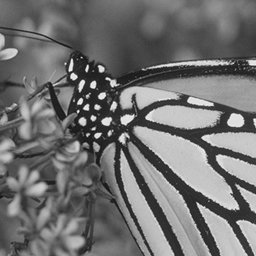}} \hspace{0.1cm}
\subfloat[]{\includegraphics[width=0.22\textwidth]{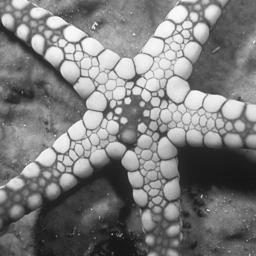}} \hspace{0.1cm}
\subfloat[]{\includegraphics[width=0.22\textwidth]{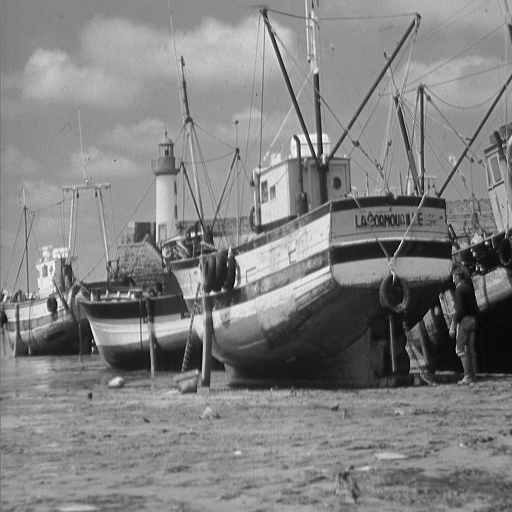}} \hspace{0.1cm}
\subfloat[]{\includegraphics[width=0.22\textwidth]{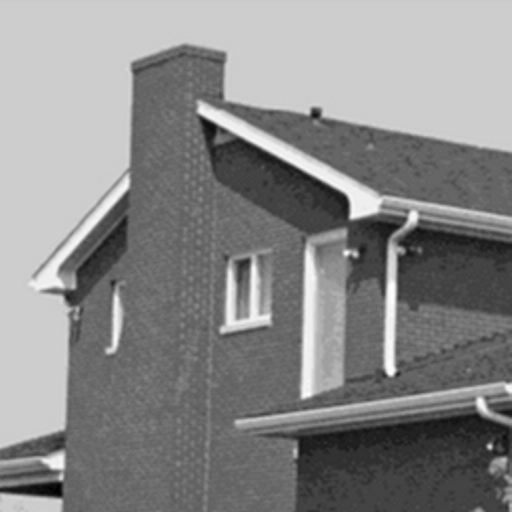}} \hspace{0.1cm}\\
\subfloat[]{\includegraphics[width=0.22\textwidth]{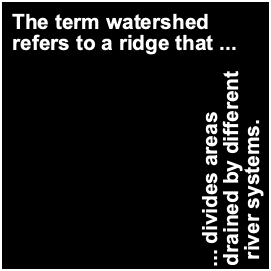}} \hspace{0.1cm}
\subfloat[]{\includegraphics[width=0.22\textwidth]{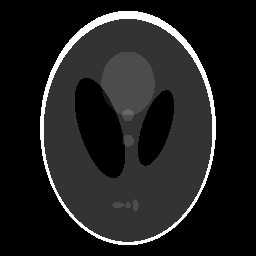}} \hspace{0.1cm}
\subfloat[]{\includegraphics[width=0.22\textwidth]{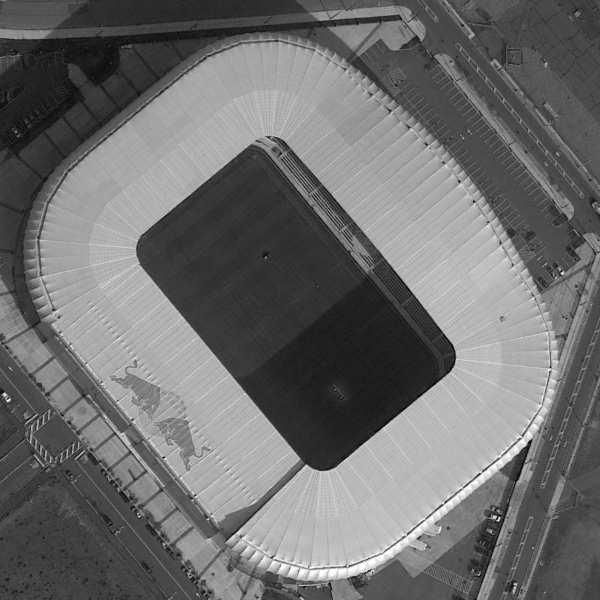}} \hspace{0.1cm}
\subfloat[]{\includegraphics[width=0.22\textwidth]{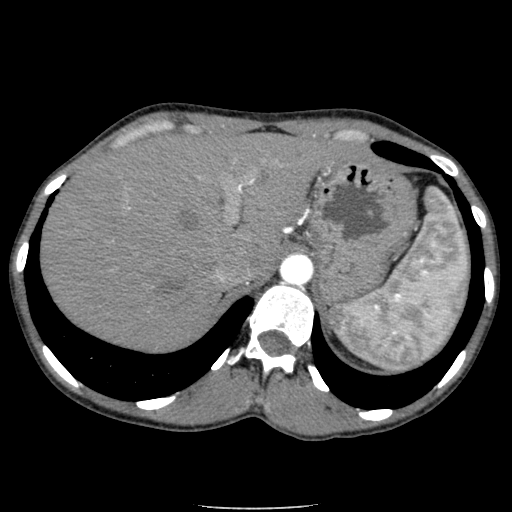}} \\[-0.2cm]

\caption{The test images. (a) Cameraman  (b) Einstein (c) Woman (d) Peppers (e) Butterfly (f) Starfish (g) Boat (h) House (i) Text (j) Logan (k) Stadium (l) Brain}
\label{tested image}
\end{figure}

We compared the proposed method with several existing TV models and methods, including the ROF model \cite{rudin1992nonlinear}, the LLT model \cite{lysaker2003noise}, the TGV model \cite{bredies2010total},  the DBC-TV model \cite{beck2016rate} and the Nesterov's method \cite{mukherjee2016study}. In the original papers \cite{rudin1992nonlinear} and \cite{lysaker2003noise}, the gradient descent scheme was used to solve the minimization problem. However, in our experiment, we employ the ADMM scheme \cite{han2012note} to improve the efficiency of the optimization process.   
Unlike standard gradient descent schemes, the ADMM framework reformulates the constrained optimization problem through decomposition into smaller sub-problems. These sub-problems are solved independently, leading to accelerated convergence and improved effectiveness for image restoration.

In all Algorithm, the stopping criterion was choosen as
\begin{equation}
     e_r = \frac{\|{u}^{k+1}-u^k\|_2}{\| u^k \|_2} <\delta
 \end{equation}
where $\delta = 1\times10^{-8}$, ${u}^{k+1}$ and ${u}^k$ correspond to the restored image obtained in the present and the previous iterations, respectively.

\begin{figure}[htbp]
\centering
\subfloat[]{\includegraphics[width=0.8\textwidth]{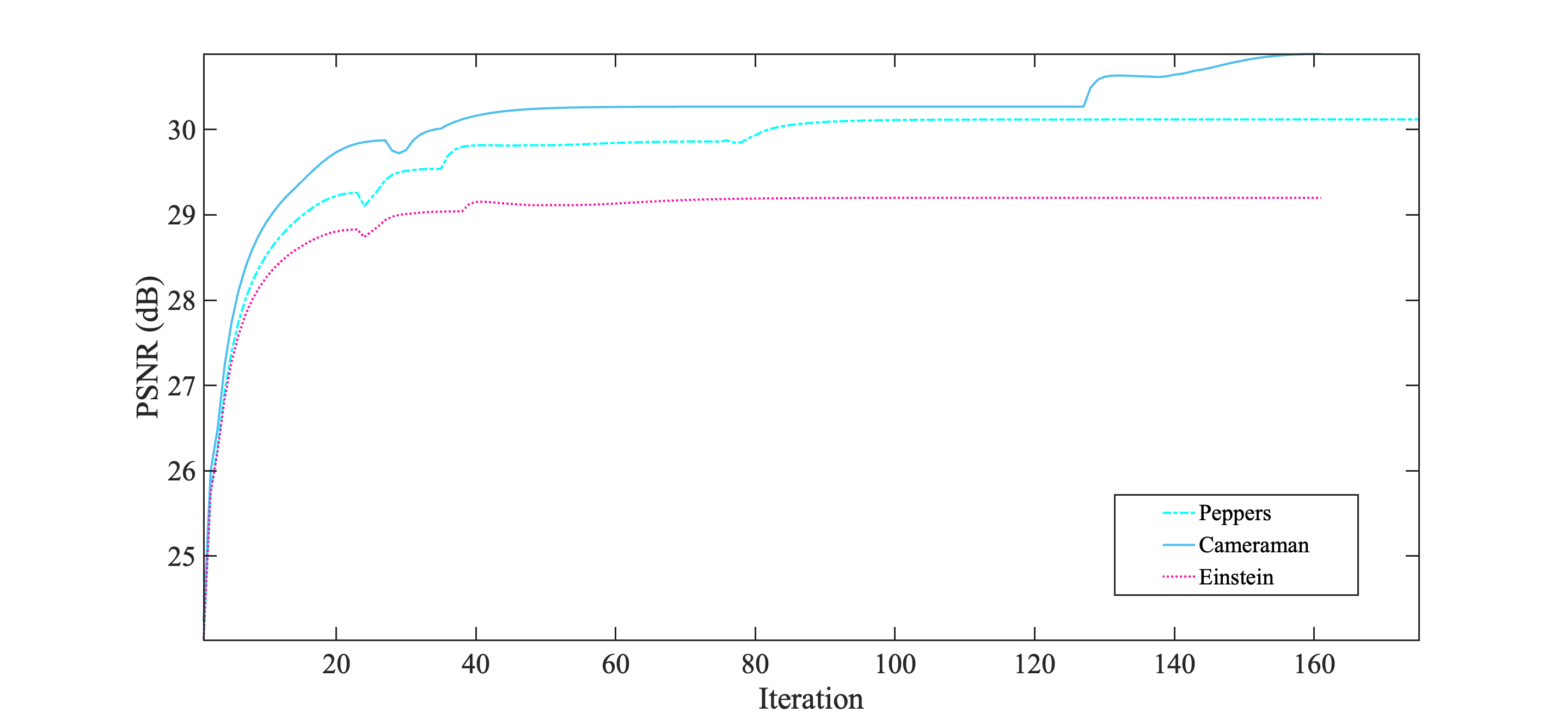}}\\
\subfloat[]{\includegraphics[width=0.8\textwidth]{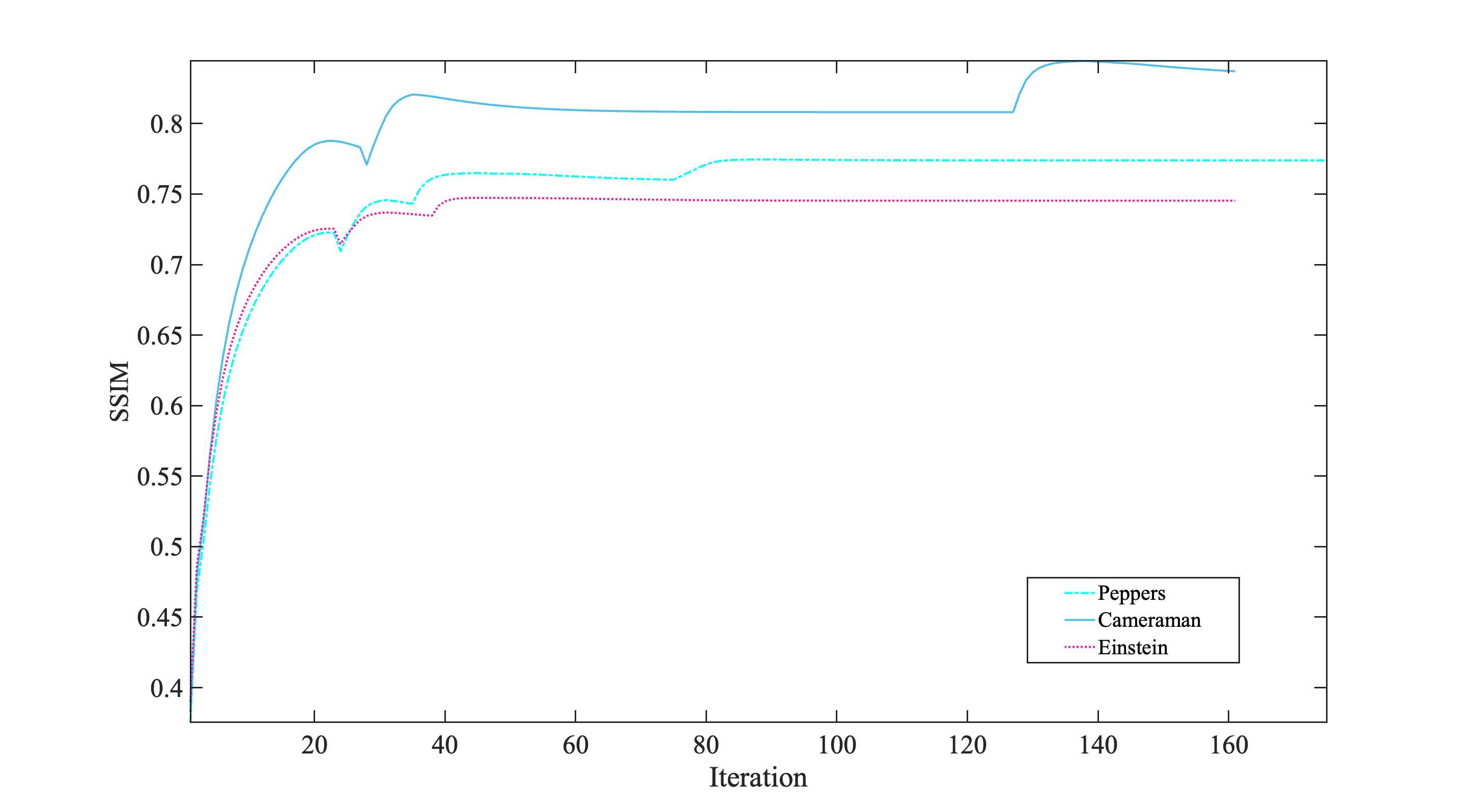}}
\caption{A visual representation of convergence analysis.(a) PSNR curves for the Peppers, Cameraman, and Einstein images; (b) SSIM curves for the same images.}
\label{PSNR and SSIM curve}
\end{figure}

In the proposed alternating optimization framework, the regularization terms $\lambda_1 \mathrm{TV}_1(u)$ and $\lambda_2 \mathrm{TV}_2(u)$ are not explicitly combined in a single objective function. Instead, each term is activated in a separate stage, and their relative influence is determined by an adaptive selection function $\sigma(i)$. Specifically, at each stage, both first- and higher- order TV sub-problems are solved independently. The restored result with the higher PSNR values is retained as the intermediate local optimal solution and used as an input for the next stage. This implicit selection mechanism allows the framework to dynamically balance edge preservation without requiring manual tuning of the relative weights  $\lambda_1$  and $ \lambda_2$. In the numerical implementation, each sub-problem has its own regularization parameter $\lambda$ used.  

An appropriate choice of $\lambda$ helps suppress artifacts while preserving sharp and natural textures. In our experiments, due to the alternating nature of our framework, a smaller $\lambda$ leads to better PSNR performance. therefore, we empirically set $\lambda = 2$. For the update of the parameter $\beta$, we adopt their original update strategy. Regarding the selection of the parameter $\alpha$, we empirically choose $\alpha = 0.618$.

\begin{figure}[htbp]
\centering
\includegraphics[width=0.8\textwidth]{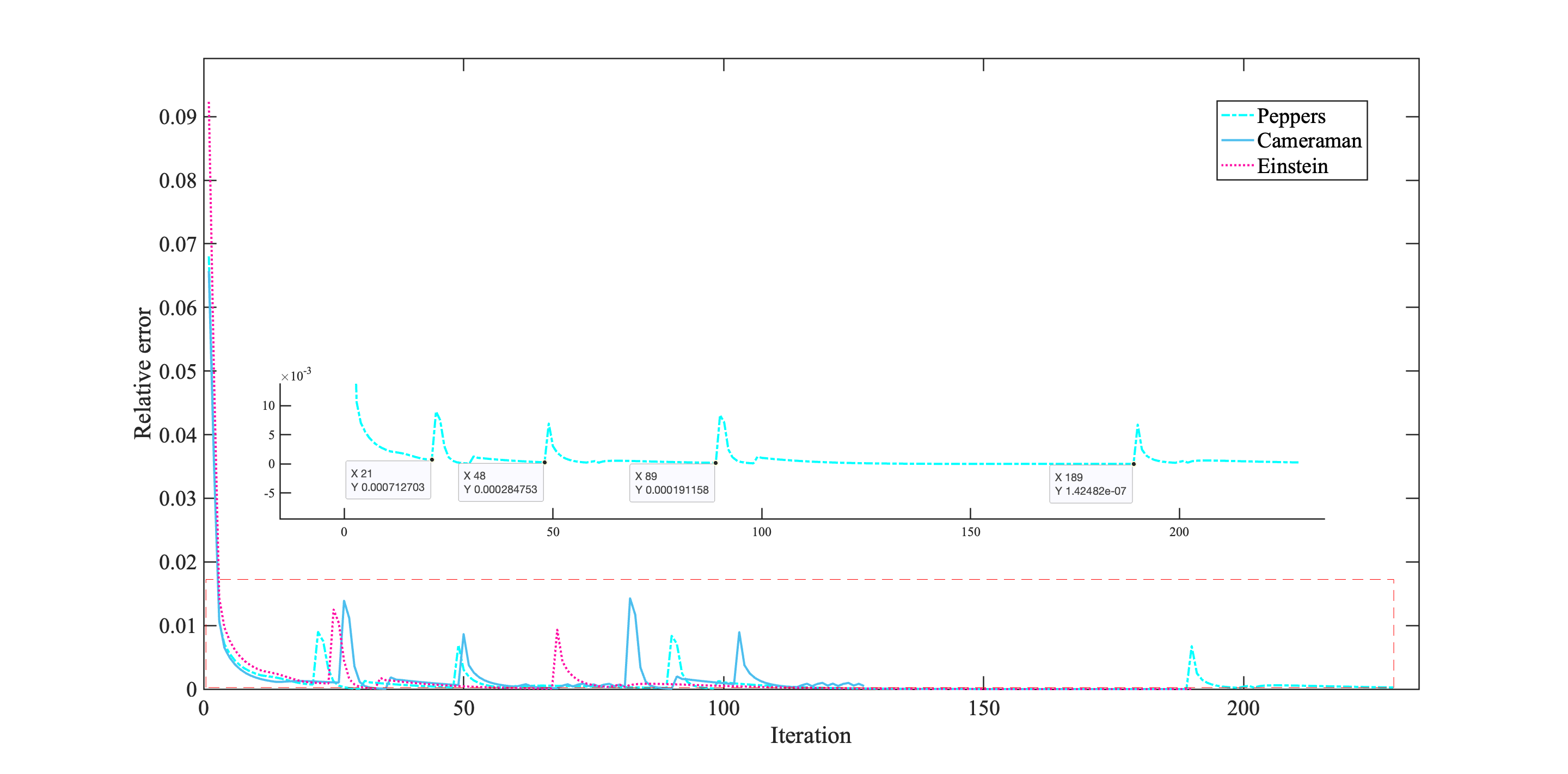}
\caption{Relative error curve for the Peppers, Cameraman, and Einstein images.} 
\label{Relative error curve}
\end{figure}   

To illustrate graphically the convergence behavior of the proposed stage-wise alternating method, we present the progression of PSNR, SSIM, and relative error metrics over the optimization iterations when $\sigma = 20$, as shown in Figures~\ref{PSNR and SSIM curve} and \ref{Relative error curve}. Although experiments were conducted on all images shown in Figure~\ref{tested image}, only the Cameraman, Peppers, and Einstein images were selected as representative test cases for their rich textural details.

\begin{figure}[htbp]
\centering
\setlength{\fboxsep}{0pt} 
\setlength{\fboxrule}{0.8pt} % 边框粗细
\subfloat[]{
    \begin{overpic}[width=0.22\textwidth]{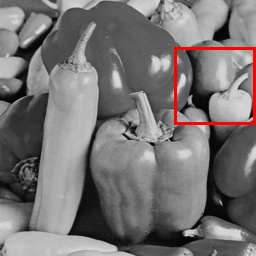}
        \put(0,0){%
            \fcolorbox{red}{white}{%
                \includegraphics[width=0.1\textwidth]{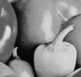}%
            }%
        }
    \end{overpic}
} 
% 第2张图
\subfloat[]{%
    \begin{overpic}[width=0.22\textwidth]{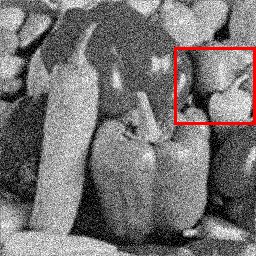}
        \put(0,0){%
            \fcolorbox{red}{white}{%
                \includegraphics[width=0.1\textwidth]{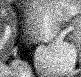}%
            }%
        }
    \end{overpic}
} 
% 第3张图
\subfloat[]{%
    \begin{overpic}[width=0.22\textwidth]{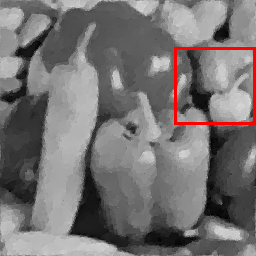}
        \put(0,0){%
            \fcolorbox{red}{white}{%
                \includegraphics[width=0.1\textwidth]{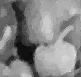}%
            }%
        }
    \end{overpic}
} 
% 第4张图
\subfloat[]{%
    \begin{overpic}[width=0.22\textwidth]{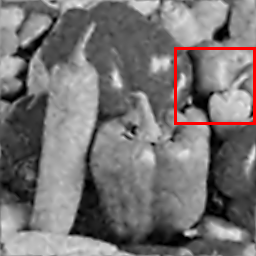}
        \put(0,0){%
            \fcolorbox{red}{white}{%
                \includegraphics[width=0.1\textwidth]{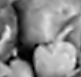}%
            }%
        }
    \end{overpic}
}
\\
% 第5张图
\subfloat[]{%
    \begin{overpic}[width=0.22\textwidth]{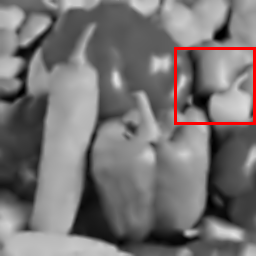}
        \put(0,0){%
            \fcolorbox{red}{white}{%
                \includegraphics[width=0.1\textwidth]{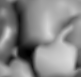}%
            }%
        }
    \end{overpic}
} 
\subfloat[]{%
    \begin{overpic}[width=0.22\textwidth]{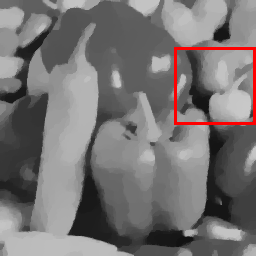}
        \put(0,0){%
            \fcolorbox{red}{white}{%
                \includegraphics[width=0.1\textwidth]{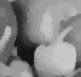}%
            }%
        }
    \end{overpic}
} 
\subfloat[]{%
    \begin{overpic}[width=0.22\textwidth]{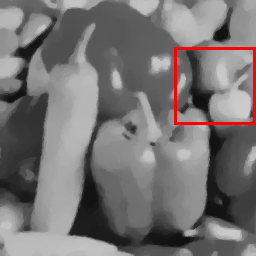}
        \put(0,0){%
            \fcolorbox{red}{white}{%
                \includegraphics[width=0.1\textwidth]{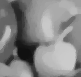}%
            }%
        }
    \end{overpic}
} 
\subfloat[]{%
    \begin{overpic}[width=0.22\textwidth]{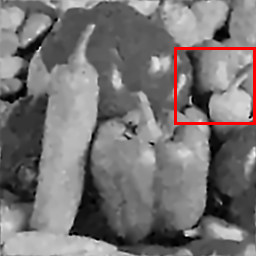}
        \put(0,0){%
            \fcolorbox{red}{white}{%
                \includegraphics[width=0.1\textwidth]{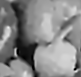}%
            }%
        }
    \end{overpic}
}
\\[-0.3cm]
\caption{Restored Peppers. (a)Original, (b)Degraded, (c)ROF model, (d)LLT model, (e)TGV model, (f)DBC-TV model, (g)Nesterov's method, and (h)Proposed method.}
\label{Peppers_marked}
\end{figure}

As observed, the PSNR and SSIM curves exhibit a slight temporary decline after a certain number of iterations, indicating the alternation of the regularizer. This is subsequently followed by a continued increase until a global optimum is reached. Empirical results indicate that the optimum is generally achieved after the $5-6$ stages. Moreover, the relative error gradually decreases as the number of stages increases, ultimately demonstrating stable convergence.

\begin{figure}[htbp]
\centering
\setlength{\fboxsep}{0pt} 
\setlength{\fboxrule}{0.8pt} 

\subfloat[]{%
    \begin{overpic}[width=0.22\textwidth]{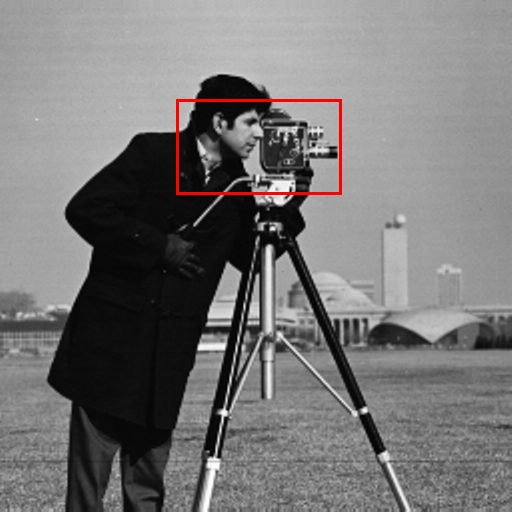}
        \put(0,0){%
            \fcolorbox{red}{white}{%
                \includegraphics[width=0.1\textwidth]{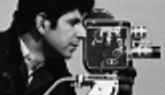}%
            }%
        }
    \end{overpic}
} \hspace{0.1cm}
\subfloat[]{%
    \begin{overpic}[width=0.22\textwidth]{image/CameramanMDe.png}
        \put(0,0){%
            \fcolorbox{red}{white}{%
                \includegraphics[width=0.1\textwidth]{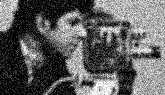}%
            }%
        }
    \end{overpic}
} \hspace{0.1cm}
\subfloat[]{%
    \begin{overpic}[width=0.22\textwidth]{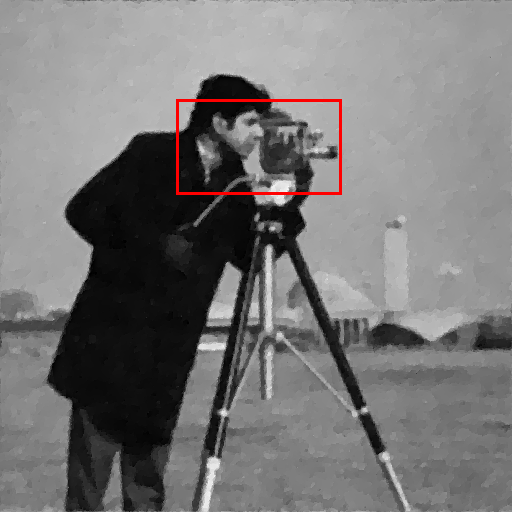}
        \put(0,0){%
            \fcolorbox{red}{white}{%
                \includegraphics[width=0.1\textwidth]{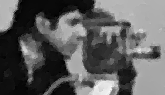}%
            }%
        }
    \end{overpic}
} \hspace{0.1cm}
\subfloat[]{%
    \begin{overpic}[width=0.22\textwidth]{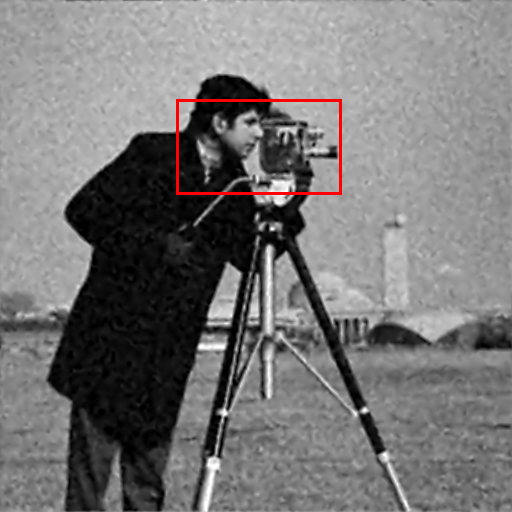}
        \put(0,0){%
            \fcolorbox{red}{white}{%
                \includegraphics[width=0.1\textwidth]{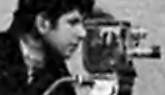}%
            }%
        }
    \end{overpic}
} \hspace{0.1cm}
\\
\subfloat[]{%
    \begin{overpic}[width=0.22\textwidth]{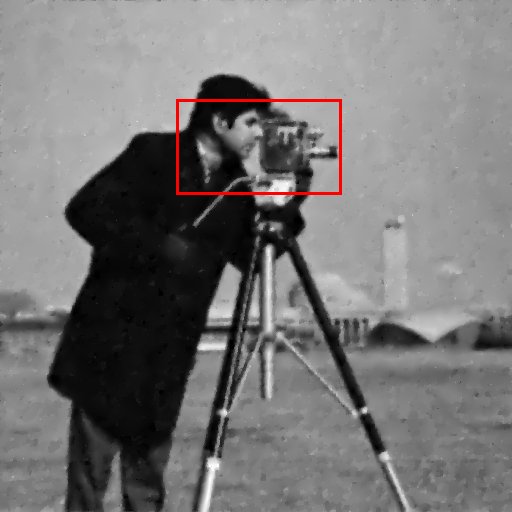}
        \put(0,0){%
            \fcolorbox{red}{white}{%
                \includegraphics[width=0.1\textwidth]{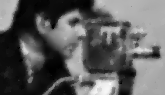}%
            }%
        }
    \end{overpic}
} \hspace{0.1cm}
\subfloat[]{%
    \begin{overpic}[width=0.22\textwidth]{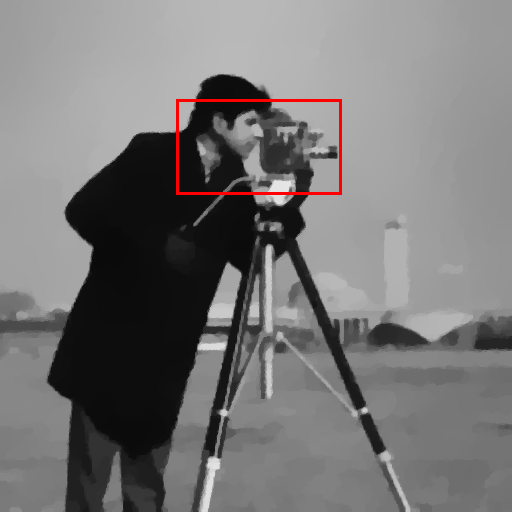}
        \put(0,0){%
            \fcolorbox{red}{white}{%
                \includegraphics[width=0.1\textwidth]{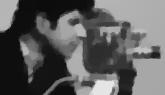}%
            }%
        }
    \end{overpic}
} \hspace{0.1cm}
\subfloat[]{%
    \begin{overpic}[width=0.22\textwidth]{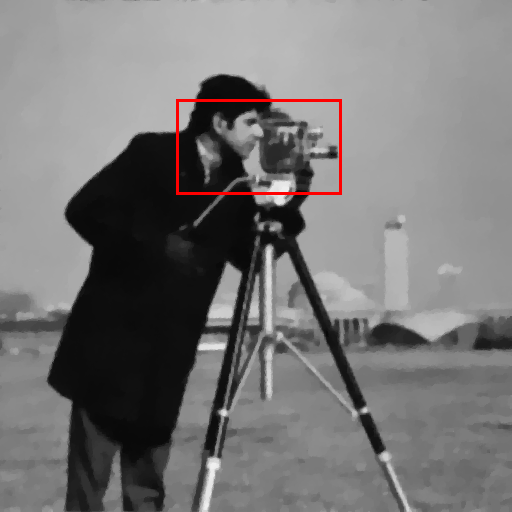}
        \put(0,0){%
            \fcolorbox{red}{white}{%
                \includegraphics[width=0.1\textwidth]{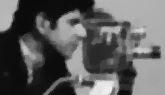}%
            }%
        }
    \end{overpic}
} \hspace{0.1cm}
\subfloat[]{%
    \begin{overpic}[width=0.22\textwidth]{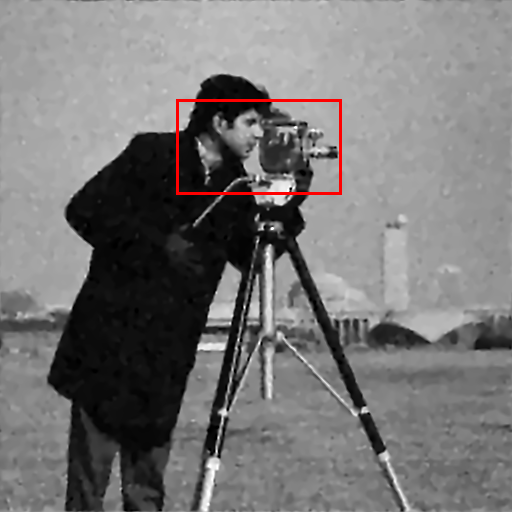}
        \put(0,0){%
            \fcolorbox{red}{white}{%
                \includegraphics[width=0.1\textwidth]{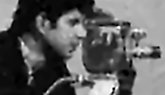}%
            }%
        }
    \end{overpic}
}
\\[-0.3cm]
\caption{Restored Cameraman. (a)Original, (b)Degraded, (c)ROF model, (d)LLT model, (e)TGV model, (f)DBC-TV model, (g)Nesterov's method, and (h)Proposed method.}
\label{Cameraman_marked}
\end{figure} 
Additionally, minor fluctuations in the relative error curve are observed throughout the iterative process, primarily resulting from differences in update strategies between the first-order TV regularization term and the higher-order TV regularization term. These fluctuations are particularly evident during the transition phase, where the TV model initially undergoes a brief adjustment period to accommodate newly imposed constraints. However, these transient fluctuations do not impede the overall convergence behavior. Rather, they reflect the algorithm’s adaptive capacity to adjust and alternate between the two TV models, ensuring effective optimization. 

\begin{figure}[htbp]
\centering
\setlength{\fboxsep}{0pt} 
\setlength{\fboxrule}{0.8pt} 

\subfloat[]{%
    \begin{overpic}[width=0.22\textwidth]{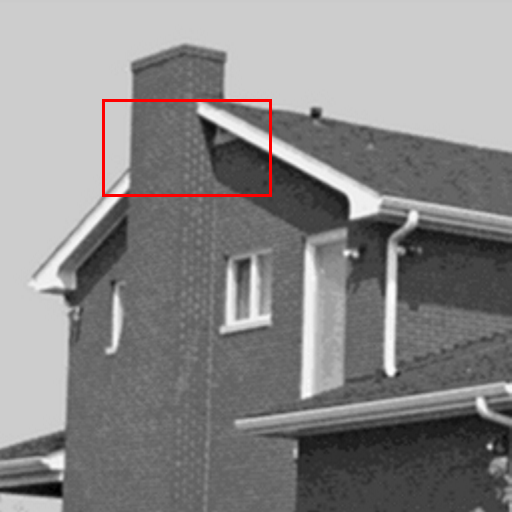}
        \put(0,0){%
            \fcolorbox{red}{white}{%
                \includegraphics[width=0.1\textwidth]{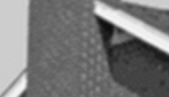}%
            }%
        }
    \end{overpic}
} \hspace{0.1cm}
% 第2张图
\subfloat[]{%
    \begin{overpic}[width=0.22\textwidth]{image/houseMDe.png}
        \put(0,0){%
            \fcolorbox{red}{white}{%
                \includegraphics[width=0.1\textwidth]{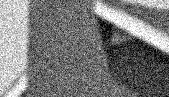}%
            }%
        }
    \end{overpic}
} \hspace{0.1cm}
% 第3张图
\subfloat[]{%
    \begin{overpic}[width=0.22\textwidth]{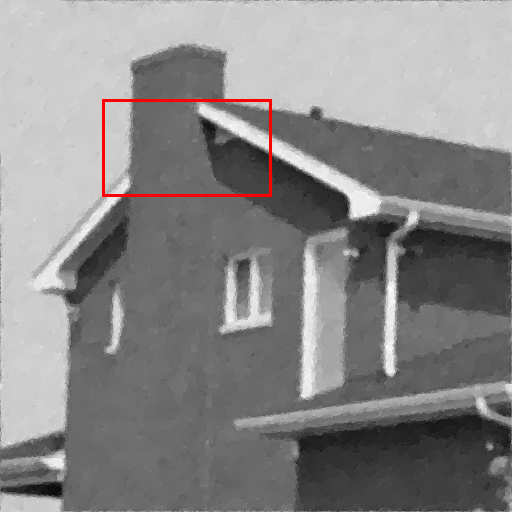}
        \put(0,0){%
            \fcolorbox{red}{white}{%
                \includegraphics[width=0.1\textwidth]{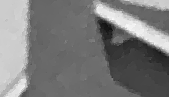}%
            }%
        }
    \end{overpic}
} \hspace{0.1cm}
% 第4张图
\subfloat[]{%
    \begin{overpic}[width=0.22\textwidth]{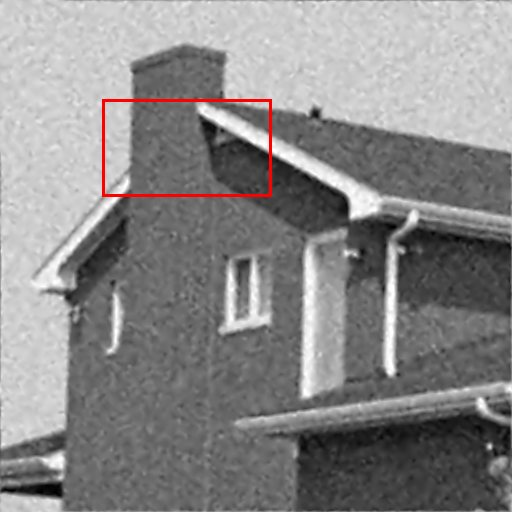}
        \put(0,0){%
            \fcolorbox{red}{white}{%
                \includegraphics[width=0.1\textwidth]{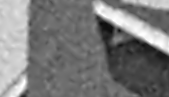}%
            }%
        }
    \end{overpic}
}
\\
% 第5张图
\subfloat[]{%
    \begin{overpic}[width=0.22\textwidth]{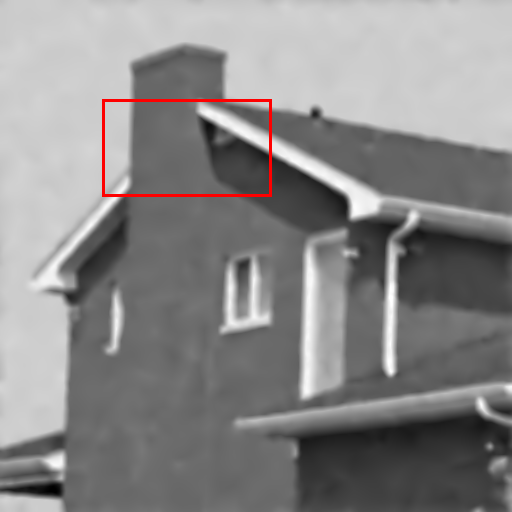}
        \put(0,0){%
            \fcolorbox{red}{white}{%
                \includegraphics[width=0.1\textwidth]{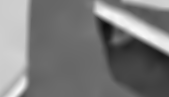}%
            }%
        }
    \end{overpic}
} \hspace{0.1cm}
% 第6张图
\subfloat[]{%
    \begin{overpic}[width=0.22\textwidth]{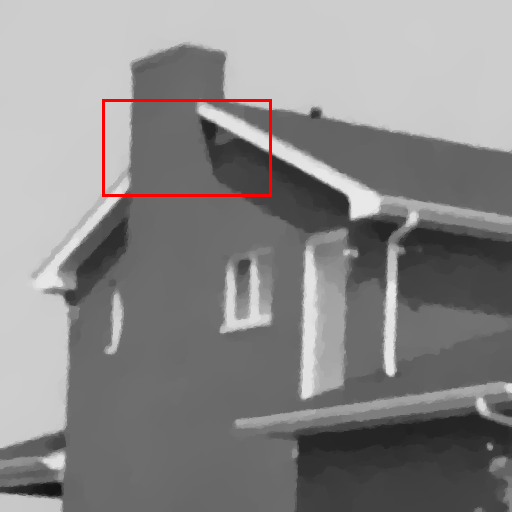}
        \put(0,0){%
            \fcolorbox{red}{white}{%
                \includegraphics[width=0.1\textwidth]{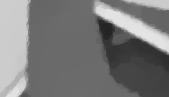}%
            }%
        }
    \end{overpic}
} \hspace{0.1cm}
% 第7张图
\subfloat[]{%
    \begin{overpic}[width=0.22\textwidth]{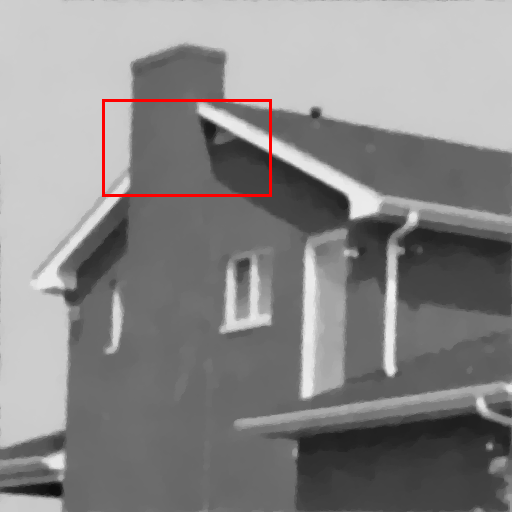}
        \put(0,0){%
            \fcolorbox{red}{white}{%
                \includegraphics[width=0.1\textwidth]{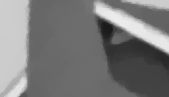}%
            }%
        }
    \end{overpic}
} \hspace{0.1cm}
% 第8张图
\subfloat[]{%
    \begin{overpic}[width=0.22\textwidth]{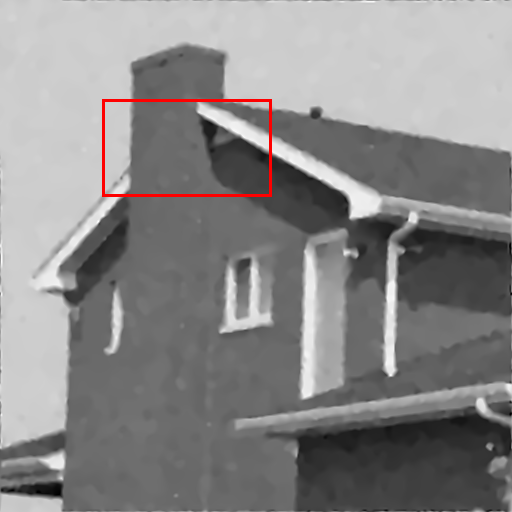}
        \put(0,0){%
            \fcolorbox{red}{white}{%
                \includegraphics[width=0.1\textwidth]{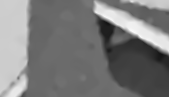}%
            }%
        }
    \end{overpic}
}
\\[-0.3cm]
\caption{Restored House. (a)Original, (b)Degraded, (c)ROF model, (d)LLT model, (e) TGV model, (f)DBC-TV model, (g)Nesterov's method, and (h)Proposed method.}
\label{House_marked}
\end{figure}

Interestingly, in the case of the Peppers image, the first stage of the restoration procedure converges at the $21$st iteration. During the subsequent iterations, the relative error curve exhibits temporary increases. Then, the second and third stages are completed at the $48$th and $89$th iterations, respectively, and the process ultimately converges at the $189$th iteration. It is worth noting that the relative error at each stage becomes progressively lower than in the previous one, providing evidence for the convergence of the proposed method
, which follows the theoretical convergence analysis shown in \cref{fig:tikz}.

Regarding the Cameraman image in Figure~\ref{Cameraman_marked}, which contains less edge information, all methods demonstrated effective denoising and satisfactory edge preservation, except for the ROF model~\cite{rudin1992nonlinear}, which exhibited residual artifacts, particularly on the cameraman’s face. Comparative analysis indicates that the LLT model~\cite{lysaker2003noise} achieves superior edge preservation. However, our proposed method yields comparable performance in maintaining the important edge.

Meanwhile, the House image in Figure~\ref{House_marked} features complex and rich edge structures and textures. Inspection of the zoomed-in region shows that all methods achieved effective denoising performance, although the ROF model~\cite{rudin1992nonlinear} displayed residual artifacts. Comparative analysis of restored enlarged images reveals that, while the TGV model~\cite{bredies2010total} introduces slight blurring, it achieves the best overall restoration by effectively balancing denoising and edge preservation. Visually, our proposed method also produces satisfactory results, underscoring its competitiveness in maintaining a favorable trade-off between noise suppression and detail retention.

\begin{figure}[htbp]
\centering
\setlength{\fboxsep}{0pt}
\setlength{\fboxrule}{0.8pt}

\subfloat[]{%
    \begin{overpic}[width=0.22\textwidth]{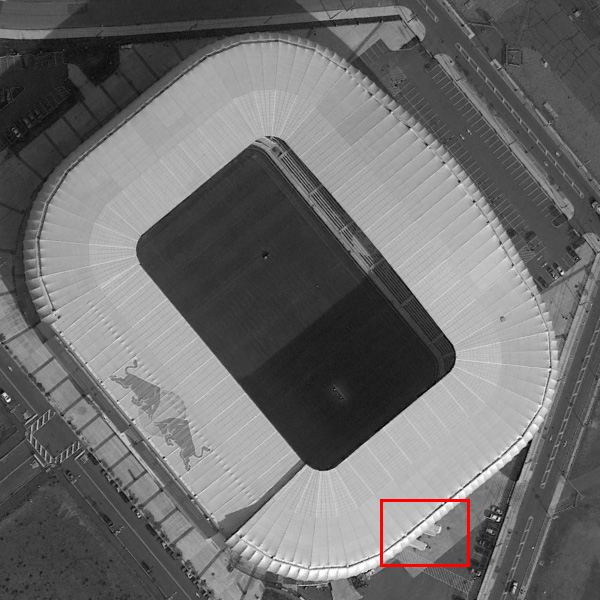}
        \put(0,0){%
            \fcolorbox{red}{white}{\includegraphics[width=0.1\textwidth]{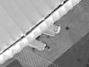}}%
        }
    \end{overpic}
} \hspace{0.1cm}
\subfloat[]{%
    \begin{overpic}[width=0.22\textwidth]{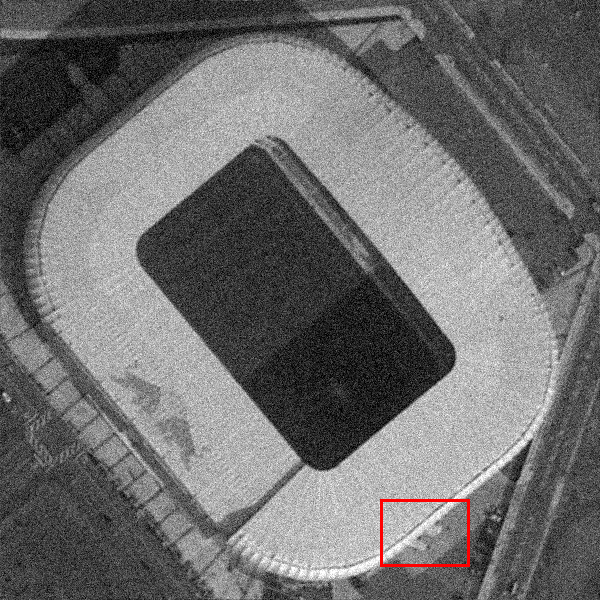}
        \put(0,0){%
            \fcolorbox{red}{white}{\includegraphics[width=0.1\textwidth]{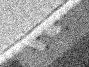}}%
        }
    \end{overpic}
} \hspace{0.1cm}
\subfloat[]{%
    \begin{overpic}[width=0.22\textwidth]{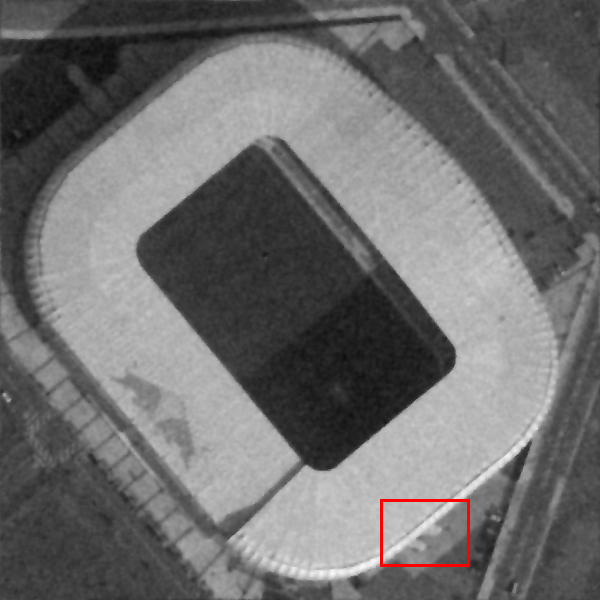}
        \put(0,0){%
            \fcolorbox{red}{white}{\includegraphics[width=0.1\textwidth]{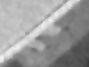}}%
        }
    \end{overpic}
} \hspace{0.1cm}
\subfloat[]{%
    \begin{overpic}[width=0.22\textwidth]{image/StadiumMHTV.png}
        \put(0,0){%
            \fcolorbox{red}{white}{\includegraphics[width=0.1\textwidth]{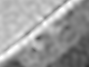}}%
        }
    \end{overpic}
}
\\
\subfloat[]{%
    \begin{overpic}[width=0.22\textwidth]{image/StadiumMTGV.png}
        \put(0,0){%
            \fcolorbox{red}{white}{\includegraphics[width=0.1\textwidth]{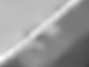}}%
        }
    \end{overpic}
} \hspace{0.1cm}
\subfloat[]{%
    \begin{overpic}[width=0.22\textwidth]{image/StadiumMDBC.png}
        \put(0,0){%
            \fcolorbox{red}{white}{\includegraphics[width=0.1\textwidth]{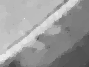}}%
        }
    \end{overpic}
} \hspace{0.1cm}
\subfloat[]{%
    \begin{overpic}[width=0.22\textwidth]{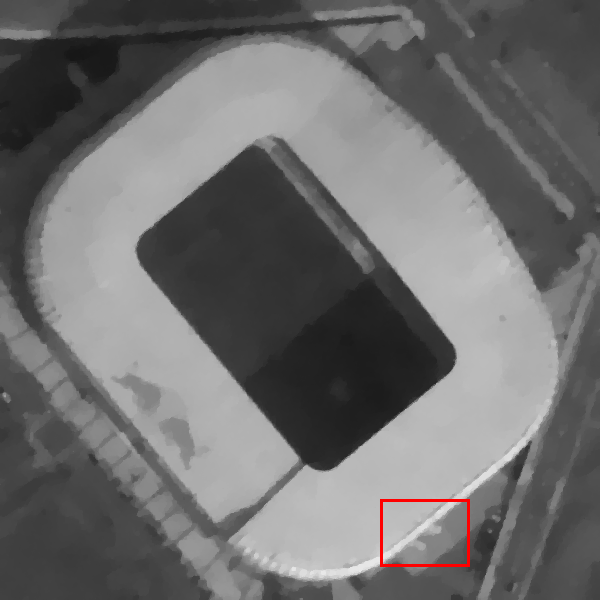}
        \put(0,0){%
            \fcolorbox{red}{white}{\includegraphics[width=0.1\textwidth]{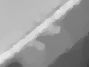}}%
        }
    \end{overpic}
} \hspace{0.1cm}
\subfloat[]{%
    \begin{overpic}[width=0.22\textwidth]{image/StadiumMPro.png}
        \put(0,0){%
            \fcolorbox{red}{white}{\includegraphics[width=0.1\textwidth]{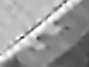}}%
        }
    \end{overpic}
}
\\[-0.3cm]
\caption{Restored Stadium. (a)Original, (b)Degraded, (c)ROF model, (d)LLT model, (e)TGV model, (f)DBC-TV model, (g)Nesterov's method, and (h)Proposed method.}
\label{Stadium_marked}
\end{figure}

Finally, examination of the more complex stadium image in Figure~\ref{Stadium_marked} reveals that all methods deliver suboptimal denoising performance, resulting in image blurring and loss of details. Nonetheless, our proposed method demonstrates superior capability in restoring image details, preserving object contours, and notably enhancing the visibility of features such as the drain pipe and the chair.

\begin{table}[htbp]
\caption{The restored results(PSNRs,SSIMs and FOMs) for $3\times3$ Gaussian blur kernel at noise levels $\sigma=10$.}
\label{table_10}
\centering
\renewcommand{\arraystretch}{1.5} % 增加行间距
\resizebox{1\textwidth}{!}{  
\begin{tabular}{l c c c c c c c}
\cline{1-8}
&\textbf{Image} & \textbf{ROF} & \textbf{LLT} & \textbf{TGV} & \textbf{DBC-TV} & \textbf{Nesterov} &\textbf{Proposed} \\
\cline{2-8}
\multirow{12}{*}{$\sigma = 10$} 
& Cameraman & 29.506/0.861/0.548 & 31.322/0.891/0.673 & 28.721/0.845/0.463 & 27.871/0.820/0.433 &  30.627/0.883/0.668 &
\textbf{34.161/0.926/0.778} \\ 
& Einstein & 28.679/0.715/0.558 & 29.527/0.759/0.635 & 28.449/0.701/0.444 & 27.459/0.670/0.426 &  29.679/0.747/0.631 &\textbf{31.069/0.805/0.757} \\ 
& Woman & 33.450/0.893/\textbf{0.618} & 34.731/0.917/0.592 & 34.160/0.896/0.432 & 32.150/0.882/0.524 & 34.375/0.900/0.590 & \textbf{35.589/0.919}/0.557 \\ 
& Peppers & 27.795/0.851/0.762 & 28.592/0.877/0.806 & 26.909/0.827/0.668 & 26.607/0.818/0.700 & 30.234/0.782/0.694 &\textbf{31.363/0.913/0.877} \\ 
&Butterfly & 23.586/0.769/0.751 & 24.364/0.815/0.804 & 22.804/0.742/0.672 & 22.835/0.703/0.665 & 25.751/0.848/0.829 &\textbf{27.033/0.881/0.931} \\ 
& Starfish & 24.757/0.745/0.725 & 25.959/0.796/0.754 & 24.238/0.709/0.622 & 23.337/0.671/0.637 & 25.644/0.779/0.772 &\textbf{28.491/0.870/0.912} \\ 
& Boat & 26.510/0.696/0.573 & 27.389/0.734/0.645 & 25.838/0.664/0.472 & 25.313/0.644/0.455 &  27.261/0.728/0.643 &\textbf{29.359/0.799/0.822} \\ 
& House & 32.602/0.901/0.404 & 35.268/0.941/\textbf{0.646} & 33.394/0.897/0.332 & 31.357/0.878/0.327 & 34.083/0.912/0.421 & \textbf{35.817/0.943}/0.523 \\ 
&Text & 18.560/0.728/0.744 & 18.977/0.711/0.756 & 18.286/0.771/0.746 & 18.846/0.826/0.754 &  19.609/0.872/0.777 &\textbf{22.681/0.873/0.828} \\ 
&Logan & 23.644/0.818/0.734 & 23.533/0.785/0.451 & 23.197/0.879/0.937& 24.021/0.903/\textbf{0.978} & 24.241/0.905/0.964 & \textbf{25.670/0.917}/0.762 \\ 
&Stadium & 27.900/0.756/0.557 & 27.890/0.747/\textbf{0.636} & 27.137/0.709/0.338 & 27.616/0.729/0.444 & 28.743/0.768/0.504 & \textbf{29.354/0.780}/0.548 \\ 
&Brain & 28.435/0.726/0.549 & 28.350/0.698/\textbf{0.625} & 27.470/0.745/0.314 &28.425/0.764/0.366 & 29.281/0.800/0.470 & \textbf{30.642/0.802}/0.500 \\ 
\cline{1-8}
\end{tabular}
}
\end{table}

The foregoing analysis demonstrates the effectiveness and convergence of our proposed method. We further validate it through objective quantitative evaluations. These observations are supported by the results summarized in Tables~\ref{table_10}-\ref{table_30}. In these tables, the best values in each row are highlighted in boldface. Our method consistently achieves superior PSNR and competitive SSIM and FOM values across various noise levels ($\sigma = 10, 20, 30$). For instance, at $\sigma = 10$, our method demonstrates superior performance over all competing methods, resulting in an average PSNR improvement of approximately $1.8$ dB and an SSIM increase of $0.04$. However, it exhibits slightly lower FOM values on images with fewer structural details and edge information, such as House, Logan, Stadium, and Brain, likely due to over-smoothing of subtle features. This leads to a minor reduction in FOM values.

\begin{table}[H]
\caption{The denoising results(PSNRs,SSIMs and FOMs) for $3\times3$ Gaussian blur kernel at noise levels $\sigma=20$.}
\label{table_20}

\centering
\renewcommand{\arraystretch}{1.5} % 增加行间距
\resizebox{1\textwidth}{!}{  
\begin{tabular}{l c c c c c c c}

\cline{1-8}

 & \textbf{Image} & \textbf{ROF} & \textbf{LLT} & \textbf{TGV} & \textbf{DBC-TV} & \textbf{Nesterov's method} &\textbf{Proposed} \\

\cline{2-8}

\multirow{12}{*}{$\sigma = 20$} 
& Cameraman & 28.983/0.848/0.669 & 30.403/0.857/0.701 & 28.348/0.835/0.480 & 27.855/0.820/0.453 & 28.679/0.844/0.533 &\textbf{31.413/0.881/0.760} \\ 
& Einstein & 28.276/0.699/0.651 & 28.966/0.747/0.677 & 28.322/0.701/0.460 & 27.471/0.667/0.438 &28.384/0.694/0.514 & \textbf{29.421/0.749/0.695} \\ 
& Woman & 32.041/0.850/0.676 & 33.032/0.882/0.664 & 33.808/0.896/0.466 & 31.947/0.874/0.556 & 32.514/0.877/\textbf{0.577} &\textbf{33.946/0.897}/0.493 \\ 
& Peppers & 27.019/0.811/0.801 & 28.002/0.851/0.811 & 26.678/0.822/0.668 & 26.518/0.810/0.709 &  28.741/0.751/0.662 &\textbf{29.221/0.869/0.873} \\ 
& Butterfly & 23.281/0.734/0.784 & 24.111/0.794/0.802 & 22.727/0.732/0.647 & 22.845/0.700/0.671 &  24.275/0.794/0.785 &\textbf{25.211/0.806/0.892} \\ 
& Starfish & 24.446/0.720/0.738 & 25.600/0.783/0.801 & 24.046/0.700/0.648 & 23.342/0.669/0.623 &  24.124/0.708/0.699 &\textbf{26.346/0.799/0.873} \\ 
& Boat & 26.196/0.681/0.638 & 26.994/0.719/0.689 & 25.715/0.657/0.461 & 25.346/0.643/0.484 & 25.970/0.671/0.565 &\textbf{27.717/0.741/0.772} \\ 
& House & 31.508/0.882/0.511 & 32.713/0.894/\textbf{0.586} & 32.032/0.883/0.308 & 31.201/0.875/0.314 &32.172/0.885/0.355 & \textbf{33.567/0.912}/0.476 \\ 
&Text & 18.433/0.561/0.747 &18.886/0.537/0.757 & 18.308/0.783/0.744 & 18.770/\textbf{0.808}/0.771 &18.925/0.796/0.763 & \textbf{21.969}/0.638/\textbf{0.828} \\ 
&Logan & 23.358/0.695/0.518 &23.308/0.652/0.320 & 23.140/0.849/\textbf{0.950} & 23.852/0.870/0.933 &23.734/\textbf{0.874}/0.949 & \textbf{25.092}/0.764/0.452 \\ 
&Stadium & 27.487/0.717/0.674 &27.297/0.697/0.672 & 27.296/0.718/0.372 & 27.589/0.719/0.367 &27.566/0.729/0.435 & \textbf{28.607/0.760/0.694} \\ 
&Brain & 27.833/0.613/0.481 &27.696/0.566/\textbf{0.614} & 27.540/0.713/0.356 & 28.188/0.727/0.399 &28.149/\textbf{0.743}/0.381 & \textbf{29.653}/0.682/0.504 \\ 
\cline{1-8}

\end{tabular}
}
\end{table}

At higher noise levels ($\sigma = 20$ and $\sigma = 30$), although the PSNR advantage of our method persists, the margins narrow and the SSIM and FOM metrics reveal mixed performance compared to other approaches such as LLT, TGV, and DBC-TV. This trend suggests that while our method effectively balances noise reduction and edge preservation in moderately noisy conditions, its relative advantage diminishes under severe noise contamination, possibly reflecting the intrinsic limitations of the hybrid modeling approach.

\begin{table}[H]
\caption{The denoising results(PSNRs,SSIMs and FOMs) for $3\times3$ Gaussian blur kernel at noise levels $\sigma=30$.}
\label{table_30}
\centering
\renewcommand{\arraystretch}{1.5} % 增加行间距
\resizebox{1\textwidth}{!}{  
\begin{tabular}{l c c c c c c c}

\cline{1-8}
 & \textbf{Image} & \textbf{ROF} & \textbf{LLT} & \textbf{TGV} & \textbf{DBC-TV} & \textbf{Nesterov's method} &\textbf{Proposed} \\
\cline{2-8}

\multirow{12}{*}{$\sigma = 30$} 
& Cameraman & 27.721/0.813/0.617 & 28.628/0.758/0.603 & 27.282/0.810/0.405 & 27.826/0.820/0.437 &  27.536/0.820/0.479 &\textbf{29.892/0.842/0.713} \\ 
& Einstein & 27.268/0.659/0.609 & 27.729/0.687/\textbf{0.706} & 28.061/0.699/0.527 & 27.494/0.664/0.488 &27.666/0.671/0.520 & \textbf{28.278/0.711}/0.637 \\ 
& Woman & 30.661/0.817/\textbf{0.634} & 30.275/0.780/0.492 & 32.133/0.883/0.503 & 31.672/0.864/0.590 &31.750/0.867/0.558 & \textbf{32.367/0.886}/0.559 \\ 
& Peppers & 25.677/0.762/0.792 & 26.778/0.783/0.828 & 25.795/0.790/0.653 & 26.391/0.802/0.718 &  27.014/0.737/0.665 &\textbf{27.504/0.828/0.841} \\ 
& Butterfly & 22.343/0.676/0.743 & 23.566/0.746/0.829 & 22.698/0.727/0.715 & 22.853/0.697/0.679 & 23.246/0.760/0.774 &\textbf{23.914/0.766/0.835} \\ 
& Starfish & 23.428/0.666/0.753 & 24.858/0.741/0.806 & 23.051/0.651/0.574 & 23.359/0.667/0.651 & 23.180/0.657/0.665 & \textbf{24.911/0.746/0.866} \\ 
& Boat & 25.316/0.640/0.631 & 26.144/0.670/\textbf{0.762} & 25.082/0.635/0.443 & 25.389/0.643/0.484 &25.091/0.634/0.502 & \textbf{26.475/0.693}/0.700 \\ 
& House & 30.134/0.852/0.477 & 30.940/0.865/\textbf{0.512} & 31.595/0.871/0.473 & 30.994/0.871/0.325 &31.101/0.874/0.350 & \textbf{31.884/0.889}/0.375 \\
&Text & 18.296/0.445/0.751 & 18.718/0.422/0.765 & 18.162/0.598/0.749 & 18.600/\textbf{0.997}/0.756 &18.291/0.723/0.743 & \textbf{21.033}/0.484/\textbf{0.824} \\
&Logan & 23.143/0.612/0.450 &22.986/0.543/0.276 & 22.891/0.692/0.591 & 23.504/\textbf{0.861}/0.930 &23.226/0.843/\textbf{0.939}& \textbf{24.414}/0.679/0.455 \\
&Stadium & 26.892/0.666/0.640 &26.892/0.666/\textbf{0.640} & 27.089/0.697/0.564 & 27.126/0.717/0.386 &26.889/0.709/0.398 & \textbf{27.518/0.722}/0.529 \\
&Brain & 27.112/0.519/0.597 &26.821/0.474/\textbf{0.567} & 27.274/0.599/0.521 & 27.562/\textbf{0.725}/0.388 &27.190/0.696/0.351& \textbf{28.475}/0.597/0.482 \\

\cline{1-8}

\end{tabular}
}
\end{table} 

Overall, across all three noise levels, our proposed method consistently achieves the highest PSNR values, with average gains of approximately 1.8 dB, 1.26 dB, and 1.03 dB over competing TV-based approaches for all tested images. It also attains superior SSIM performance in most cases, particularly for images characterized by sharp edges and rich textures, such as Peppers, Butterfly, and Starfish. Conversely, it exhibits marginally lower FOM values on smoother images (e.g., House, Logan, Stadium, Brain), especially at elevated noise levels. Under the most severe noise condition ($\sigma = 30$), both SSIM and FOM metrics indicate inferior performance relative to LLT, TGV, and DBC-TV models, suggesting that the method’s effectiveness in preserving precise edge structures diminishes as noise level increases.

These findings collectively validate the effectiveness and denoising capability of our approach across a diverse range of image types, demonstrating a consistent ability to suppress noise while preserving crucial edge and texture details. At the same time, the results delineate the constraints of the proposed method in maintaining fine edge preservation when confronted with extreme noise degradation, highlighting areas for potential future improvement.

\begin{table}[htbp]
\caption{The denoising results (PSNRs, SSIMs and FOMs) for $3\times3$ Gaussian blur kernel at noise levels $\sigma=20$.}
\label{table_net_20_combined}
\centering
\renewcommand{\arraystretch}{1.5}
\resizebox{1\textwidth}{!}{
\begin{tabular}{l c c c c c c}
\cline{1-7}
 & \textbf{Image} & \textbf{DRUNet} & \textbf{FFDNet} & \textbf{DnCNN} & \textbf{DPIR} & \textbf{Proposed} \\
\cline{2-7}

\multirow{12}{*}{$\sigma = 20$}

& Cameraman & 28.922 / 0.862 / \textbf{0.842} & 28.753 / 0.857 / 0.832 & 27.904 / 0.821 / 0.793 & 31.356 / \textbf{0.884} / 0.684 & \textbf{31.413} / 0.881 / 0.760 \\ 

& Einstein & 29.115 / 0.737 / \textbf{0.708} & 28.956 / 0.731 / 0.674 & 28.001 / 0.675 / 0.584 & \textbf{30.049} / \textbf{0.756} / 0.649 & 29.421 / 0.749 / 0.695 \\ 

& Woman & 34.574 / 0.903 / \textbf{0.752} & 34.304 / 0.899 / 0.711 & 33.366 / 0.884 / 0.681 & \textbf{34.987} / \textbf{0.905} / 0.533 & 33.946 / 0.897 / 0.493 \\ 

& Peppers & 29.592 / 0.775 / 0.841 & 29.485 / 0.772 / 0.824 & 28.897 / 0.754 / 0.787 & \textbf{30.817} / 0.790 / 0.801 & 29.221 / \textbf{0.869} / \textbf{0.873} \\ 

& Butterfly & 24.016 / 0.803 / 0.876 & 23.894 / 0.798 / 0.872 & 23.477 / 0.771 / 0.801 & \textbf{27.026} / \textbf{0.871} / 0.866 & 25.211 / 0.806 / \textbf{0.892} \\ 

& Starfish & 24.667 / 0.735 / 0.766 & 24.415 / 0.724 / 0.747 & 23.738 / 0.676 / 0.646 & 26.017 / 0.785 / 0.789 & \textbf{26.346} / \textbf{0.799} / \textbf{0.873} \\ 

& Boat & 26.224 / 0.695 / 0.710 & 26.094 / 0.688 / 0.700 & 25.349 / 0.637 / 0.624 & 27.407 / 0.730 / 0.669 & \textbf{27.717} / \textbf{0.741} / \textbf{0.772} \\ 

& House & 34.561 / 0.911 / 0.873 & 34.203 / 0.905 / \textbf{0.889} & 33.056 / 0.882 / 0.818 & \textbf{35.651} / \textbf{0.913} / 0.402 & 33.567 / 0.912 / 0.476 \\ 

& Text & 17.271 / 0.846 / 0.746 & 16.972 / 0.822 / 0.742 & 16.715 / 0.357 / 0.734 & \textbf{22.976} / \textbf{0.968} / \textbf{0.864} & 21.969 / 0.638 / 0.828 \\ 

& Logan & 22.516 / 0.915 / 0.824 & 22.536 / 0.906 / \textbf{0.974} & 22.358 / 0.589 / 0.973 & \textbf{27.158} / \textbf{0.970} / 0.966 & 25.092 / 0.764 / 0.452 \\ 

& Stadium & 27.711 / 0.740 / \textbf{0.713} & 27.577 / 0.730 / 0.669 & 26.868 / 0.697 / 0.518 & 28.601 / 0.758 / 0.533 & \textbf{28.607} / \textbf{0.760} / 0.694 \\ 

& Brain & 27.769 / 0.796 / \textbf{0.843} & 27.693 / 0.788 / 0.826 & 26.996 / 0.439 / 0.821 & \textbf{30.232} / \textbf{0.822} / 0.423 & 29.653 / 0.682 / 0.504 \\ 

\cline{2-7}
& \textbf{Average} & 27.245 / 0.810 / \textbf{0.791} & 27.074 / 0.802 / 0.788 & 26.567 / 0.706 / 0.731 & 28.693 / \textbf{0.845} / 0.681 & \textbf{28.935} / 0.791 / 0.682 \\ 
\cline{1-7}
\end{tabular}
}
\end{table}

Furthermore, we compare our method with several representative learning-based models, including DRUNet \cite{zhang2017learning}, FFDNet \cite{zhang2018ffdnet}, DnCNN \cite{zhang2017beyond}, and the Plug-and-Play method DPIR \cite{zhang2021plug}.

As shown in Table~\ref{table_net_20_combined}, the proposed method achieves the highest average PSNR value. Compared with learning-based models, our approach improves the average PSNR by approximately 1.86~dB, 1.69~dB, and 2.23~dB, respectively, while also achieving a 0.24~dB improvement over the DPIR method. The corresponding visual comparisons are presented in Figure~\ref{Text_net}.

\begin{figure}[htbp]
\centering
\setlength{\fboxsep}{0pt}
\setlength{\fboxrule}{0.8pt}

\subfloat[]{%
    \begin{overpic}[width=0.16\textwidth]{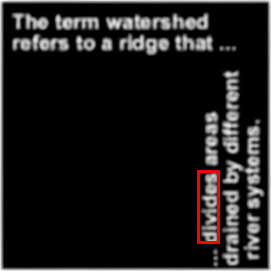}
        \put(0,0){%
            \fcolorbox{red}{white}{\includegraphics[width=0.03\textwidth]{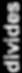}}%
        }
    \end{overpic}
} \hspace{0.1cm}
\subfloat[]{%
    \begin{overpic}[width=0.16\textwidth]{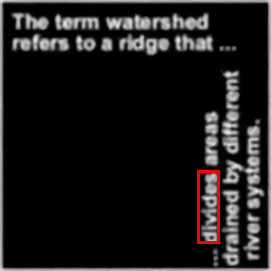}
        \put(0,0){%
            \fcolorbox{red}{white}{\includegraphics[width=0.03\textwidth]{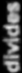}}%
        }
    \end{overpic}
} \hspace{0.1cm}
\subfloat[]{%
    \begin{overpic}[width=0.16\textwidth]{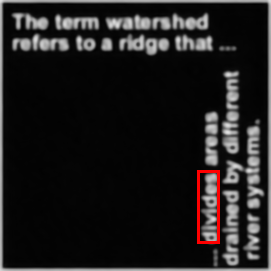}
        \put(0,0){%
            \fcolorbox{red}{white}{\includegraphics[width=0.03\textwidth]{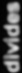}}%
        }
    \end{overpic}
} \hspace{0.1cm}
\subfloat[]{%
    \begin{overpic}[width=0.16\textwidth]{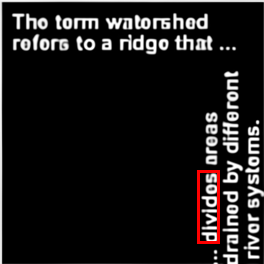}
        \put(0,0){%
            \fcolorbox{red}{white}{\includegraphics[width=0.03\textwidth]{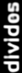}}%
        }
    \end{overpic}
}
\hspace{0.1cm}
\subfloat[]{%
    \begin{overpic}[width=0.16\textwidth]{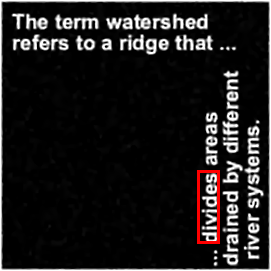}
        \put(0,0){%
            \fcolorbox{red}{white}{\includegraphics[width=0.03\textwidth]{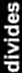}}%
        }
    \end{overpic}
}
\\[-0.3cm]
\caption{Restored Text. (a)DRUNet, (b)FFDNet, (c)DnCNN, (d)DPIR, (f)Proposed method.}
\label{Text_net}
\end{figure} 

For the Text image, our proposed method obtains relatively higher PSNR and FOM values. Although the metrics are slightly lower than those of the DPIR method, the visual comparison indicates that DPIR introduces noticeable distortions. For instance, the word ``divides'' is incorrectly restored as ``dlvldos''. In contrast, our method successfully preserves the structural properties of the word ``divides'', demonstrating its superior accuracy.

In summary, the qualitative and quantitative analyses substantiate the effectiveness of our proposed method in image restoration tasks. Compared with classical TV-based methods as well as representative learning-based models, our method  achieves superior performance in both visual quality and quantitative metrics, thereby affirming its practical utility and competitiveness among current state-of-the-art techniques.

\section{Conclusion}
\label{sec5}

In this paper, we proposed a novel semi-convergent stage-wise framework for adaptive total variation regularization, which effectively leverages the complementary strengths of both first- and higher-order TV regularizers. The convergence of the stage-wise alternating method was established both theoretically and numerically. Extensive experiments demonstrate that the proposed method  outperforms classical TV-based approaches and  several learning-based methods in terms of PSNR, SSIM, and FOM metrics. Both visual and quantitative results confirm the superiority of the proposed method in preserving fine details, yielding restored images that are visually pleasing and quantitatively robust. Nevertheless, some limitations remain, particularly under severe noise conditions, where the effectiveness of the proposed method is somewhat diminished. This suggests potential room for improvement in the design and integration of TV-based models. Future work will explore the extension of the hybrid alternating optimization framework to other advanced regularization models, aiming to further enhance restoration quality and computational efficiency. The promising results obtained in this study lay a solid foundation for such developments.

%\section*{Acknowledgments}
%We would like to acknowledge the assistance of .

\bibliographystyle{siamplain}%
\bibliography{references}
\end{document}